\documentclass{amsart}
\usepackage{bm}
\usepackage{amssymb}
\usepackage{amsthm}
\usepackage{mathtools}
\usepackage{float}
\usepackage{tikz}
\usepackage{tikz-cd}
\usepackage{bbm} 
\usepackage{hyperref}
\usepackage{verbatim}
\usepackage{hyperref}
\hypersetup{
	colorlinks=true,
	linkcolor=red,
	filecolor=magenta,      
	urlcolor=cyan,
	pdftitle={Overleaf Example},
	pdfpagemode=FullScreen,
}
\usepackage[capitalise]{cleveref}

\makeatletter
\newtheorem*{rep@theorem}{\rep@title}
\newcommand{\newreptheorem}[2]{%
	\newenvironment{rep#1}[1]{%
		\def\rep@title{#2 \ref{##1}}%
		\begin{rep@theorem}}%
		{\end{rep@theorem}}}
\makeatother

\newtheorem{theorem}{Theorem}[section]
\newreptheorem{theorem}{Theorem}
\newtheorem*{theorem*}{Theorem}
\newtheorem{corollary}[theorem]{Corollary}
\newreptheorem{corollary}{Corollary}
\newtheorem{lemma}[theorem]{Lemma}

\newtheorem{proposition}[theorem]{Proposition}
\newreptheorem{proposition}{Proposition}

\newtheorem*{claimn*}{Claim}

\theoremstyle{definition}
\newtheorem{definition}[theorem]{Definition} 
\newtheorem{remark}[theorem]{Remark}

\newtheorem{notation}[theorem]{Notation}

\newcommand{\cantor}{2^{\omega}}
\newcommand{\baire}{\omega^{\omega}}

\newcommand{\bSigma}[2]{\bm{\Sigma}^{#1}_{#2}}
\newcommand{\bPi}[2]{\bm{\Pi}^{#1}_{#2}}

\newcommand{\lSigma}[2]{\Sigma^{#1}_{#2}}
\newcommand{\lPi}[2]{\Pi^{#1}_{#2}}
\newcommand{\lDelta}[2]{\Delta^{#1}_{#2}}

\newcommand{\clo}{\mathrm{cl}_{\omega}}
\newcommand{\open}[1]{[#1]^{\prec}}
\newcommand{\concat}[2]{#1^{\smallfrown}#2}
\newcommand{\stem}[1]{\mathrm{stem}(#1)}

\newcommand{\T}{\mathrm{T}}

\newcommand{\ideal}[1]{\mathcal{#1}}
\newcommand{\ock}{\omega^{\mathrm{CK}}_1}
\newcommand{\ko}{\mathcal{O}}

\newcommand{\zfc}{\ensuremath{\mathsf{ZFC}}}

\newcommand{\finf}{\forall^\infty}

\newcommand{\hyp}{\mathsf{HYP}}
\newcommand{\lhyp}{\leq_{\hyp}}
\newcommand{\wl}[1]{#1\text{-}\mathsf{WL}}
\newcommand{\li}[1]{#1\text{-}\mathsf{L}}
\newcommand{\SME}[1]{#1\text{-}\mathsf{SME}}
\newcommand{\SNE}[1]{#1\text{-}\mathsf{SNE}}
\newcommand{\domm}[1]{#1\text{-}\mathsf{DOM}}

\newcommand{\seq}[2]{\langle #1 : #2 \rangle}

\newcommand{\forces}{\Vdash}
\newcommand{\use}[1]{\mathrm{use}(#1)}

\title{Listing the hyperarithmetical functions}
\author{Joseph S. Miller} 
\author{Gian Marco Osso}
\author{Isabella Scott}

\begin{document}
	
	\begin{abstract}
		Given a countable Turing ideal $\ideal{I} \subseteq \baire$, we say that $x$ is a \emph{list} (resp.\ \emph{weak list}) of $\ideal{I}$ if $\ideal{I}=\{x^{[n]} : n \in \omega\}$ (resp.\ if $\ideal{I} \subseteq \{x^{[n]} :n \in \omega\}$). We show that, for several natural ideals $\ideal{I}$, $x$ computes a list of $\ideal{I}$ if and only if it computes a function dominating all the functions in $\ideal{I}$. On the other hand, we provide reals which are $\hyp$-strongly null engulfing (and hence $\hyp$-dominating, by results of \cite{GKT}) but which cannot compute a weak list for $\hyp$, solving a problem left open in \cite{GO}.  This result can be generalized to any countable ideal which is downward closed under $\lhyp$. We also give a characterization of reals which compute a list of $\hyp$: $x$ computes a list of $\hyp$ if and only if $x$ is $\hyp$-dominating and $\ko$ is $\lSigma{0}{2}(x)$.
	\end{abstract}
	\maketitle
	\section{Introduction}
	
	This paper solves a problem which comes up naturally in \cite{GO} in the context of computable analogues of cardinal characteristics. The latter paper builds on work of \cite{GKT} (see also \cite{nies}), showing how one can associate to any countable Turing ideal $\ideal{I}$ the so-called \emph{$\ideal{I}$-non-lowness classes}, corresponding to computational analogues of cardinal characteristics (the classes of interest in \cite{GKT} are specifically those corresponding to cardinals in Cicho\'{n}'s diagram). It is shown in \cite{GKT} that, whenever we have a $\zfc$ provable inequality $A \leq B$ between cardinal characteristics in Cicho\'{n}'s diagram, then, for every $\ideal{I}$, the $\ideal{I}$-non-lowness class of $A$ is contained in the $\ideal{I}$-non lowness class of $B$. 
	
	In \cite{GKT}, the authors single out the classes $\domm{\ideal{I}}$ (\emph{$\ideal{I}$-dominating}), $\SME{\ideal{I}}$ (\emph{$\ideal{I}$-strongly meagre engulfing}) and $\SNE{\ideal{I}}$ (\emph{$\ideal{I}$-strongly null engulfing}), defined below, for their interesting behavior. According to the $\zfc$-provable inequalities between the associated cardinals, the inclusions $\SNE{\ideal{I}} \subseteq \SME{\ideal{I}} \subseteq \domm{\ideal{I}}$ hold for every countable Turing ideal. Set theory would also predict that $\SNE{\ideal{I}} \subsetneq \SME{\ideal{I}} \subsetneq \domm{\ideal{I}}$, but, for example, we have $\SNE{\mathsf{REC}}=\SME{\mathsf{REC}}=\domm{\mathsf{REC}}$. Analyzing the proofs of these facts, one observes the following: (i) a real $x$ which dominates all the computable functions is actually capable of \emph{listing} all the computable functions (see \cite{jockusch}) and (ii) a real $x$ that can list an ideal $\ideal{I}$ is in $\SNE{\ideal{I}}$ (see \cite[Theorem 6]{rupprecht}, or \cite[Theorem 2]{nies} for a more direct proof that listing reals are strongly meagre engulfing. The latter proof can be easily translated from the context of category to that of measure). This readily explains the unexpected inclusion $\domm{\mathsf{REC}} \subseteq \SNE{\mathsf{REC}}$ and hence the collapse of the entire chain. 
	
	In \cite{GO}, the authors were able to prove that, if $\ideal{I}$ is downward closed under $\lhyp$, then $\SNE{\ideal{I}} \subsetneq \SME{\ideal{I}} \subsetneq \domm{\ideal{I}}$. It immediately follows that, for such $\ideal{I}$, not all reals $x$ which dominate all the functions in $\ideal{I}$ can also list $\ideal{I}$. This paper addresses the natural question: what conditions on $\ideal{I}$ are sufficient (and perhaps necessary) to allow for some $x \in \SNE{\ideal{I}}$ that cannot list $\ideal{I}$?
	
	For any countable Turing ideal $\ideal{I}$, we say that $x$ is a \emph{weak list} of $\ideal{I}$ if, for every $y \in \ideal{I}$, there is some $n \in \omega$ with $x^{[n]}=y$, and we say that $z$ is a \emph{list} of $\ideal{I}$ if $\{z^{[n]} : n \in \omega\}=\ideal{I}$. Our first theorem is
	
	\begin{theorem*}[\cref{thm:list}]
		For any $\alpha < \ock$, let $\ideal{I}_{\alpha}=\{x \in \baire : \exists \beta < \alpha \,\, (x \leq_\T \emptyset^{(\beta)})\}$. Any $f \in \baire$ which dominates all the functions of $\ideal{I}_{\alpha}$ computes a list of $\ideal{I}_{\alpha}$.
	\end{theorem*}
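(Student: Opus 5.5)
The plan is to split into the cases $\alpha=\beta+1$ and $\alpha$ a limit, and in each case to reduce to a relativized form of Jockusch's argument for $\mathsf{REC}$. The basic tool will be the fact (due to Solovay, and also derivable by effective transfinite induction on notations) that every hyperarithmetical set $A$ has a \emph{modulus}: a function $g_A\le_\T A$ such that every $h$ dominating $g_A$ computes $A$. For $A=\emptyset^{(\beta)}$ I will take $g_\beta$ to be the natural settling-time modulus along the $H$-set hierarchy. For $h\ge g_\beta$ everywhere, $A$ is computed uniformly from $h$ by running the approximation to stage $h(x)$. If $h$ dominates $g_\beta$ only from some $n$ on, it suffices to patch finitely many values, so the reduction is non-uniform only in $n$.

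\emph{Successor case.} Let $\alpha=\beta+1$, so $\ideal{I}_\alpha=\{y : y\le_\T\emptyset^{(\beta)}\}$. Since $g_\beta\in\ideal{I}_\alpha$, $f$ dominates $g_\beta$ and therefore computes $\emptyset^{(\beta)}$. We then run Jockusch's construction relative to $\emptyset^{(\beta)}$. For each $e,n$ define $y_{e,n}(m)=\Phi_e^{\emptyset^{(\beta)}}(m)$ as long as, for every $k$ with $n\le k\le m$, the computation $\Phi_e^{\emptyset^{(\beta)}}(k)$ converges within $f(k)$ steps. At the first $m$ where this fails, $y_{e,n}$ outputs $0$ from then on.
\begin{itemize}
\item Each $y_{e,n}$ is computable from $f\oplus\emptyset^{(\beta)}\le_\T f$, uniformly in $(e,n)$, so $f$ computes the sequence $\seq{y_{e,n}}{e,n\in\omega}$.
\item Each $y_{e,n}$ is either equal to $\Phi_e^{\emptyset^{(\beta)}}$, which must then be total, or is a finite string followed by zeros. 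In both cases $y_{e,n}\in\ideal{I}_\alpha$.
\item If $\Phi_e^{\emptyset^{(\beta)}}$ is total, its running-time function is $\emptyset^{(\beta)}$-computable. Hence $f$ dominates it beyond some $n$, and then $y_{e,n}=\Phi_e^{\emptyset^{(\beta)}}$.
\end{itemize}
So the sequence of the $y_{e,n}$ is a list of $\ideal{I}_\alpha$.

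\emph{Limit case.} Fix a computable fundamental sequence $\beta_i\nearrow\alpha$. The function $f$ dominates each $g_{\beta_i}$, but with a threshold $n_i$ that depends non-uniformly on $i$. The idea is to put the guesses into the list. For each triple $(i,n,\sigma)$, use $f$ patched by $\sigma$ below $n$ as a candidate modulus $h_{i,n,\sigma}$, and let $A_{i,n,\sigma}$ be the set it computes via the reduction for $g_{\beta_i}$. Then run the successor-case construction with oracle $A_{i,n,\sigma}$ in place of $\emptyset^{(\beta_i)}$. For the correct guesses, $A_{i,n,\sigma}=\emptyset^{(\beta_i)}$, so every member of $\ideal{I}_\alpha$ appears in the list.

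\emph{Main obstacle.} The difficulty is soundness: a wrong guess may produce a set $A_{i,n,\sigma}$ that is not hyperarithmetical, and then the associated functions need not lie in $\ideal{I}_\alpha$. To handle this I would first try to build the reduction of $\emptyset^{(\beta_i)}$ from a candidate modulus so that it is self-correcting, in the sense that the output is always some $\emptyset^{(\beta_i)}$-computable set regardless of the candidate. The approach is to compute the $H$-set hierarchy level by level. At each successor step $Y_{\gamma+1}=Y_\gamma'$, we use the candidate bound only to decide when to stop searching. Once an inconsistency is detected (an element enumerated after being declared out), we switch to a default that depends only on finite data.

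If that fails, a second option is to diagonalize away from the wrong guesses. Following Jockusch again, we let a listed function follow a computation only while it converges within the time bounds supplied by $f$, so that failures collapse the function to a finite-variant default. This keeps every listed function bounded by something computable in $\emptyset^{(\beta_i)}$ together with finitely many parameters. Verifying this soundness uniformly across the limit is the step I expect to require real work.
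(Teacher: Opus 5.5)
\textbf{There is a genuine gap in the limit case, which is where the content of the theorem lies.} You correctly isolate the problem. For a wrong guess $(i,n,\sigma)$, the candidate $h_{i,n,\sigma}$ does not majorize $g_{\beta_i}$, and the reduction can converge everywhere to a wrong set $A_{i,n,\sigma}$. That set need not lie in $\ideal{I}_\alpha$, so running Jockusch's construction relative to it would put foreign functions into your list. But you do not give a mechanism that rules this out; you offer two options and leave the verification open.

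Your second option cannot work as stated. Time bounds supplied by $f$ only filter out divergent computations, not convergent but incorrect ones. Also, being bounded by a function in $\ideal{I}_\alpha$ says nothing about membership in $\ideal{I}_\alpha$.

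Your first option can be made to work, but the following argument is missing.
\begin{enumerate}
\item If $h$ is used only as a search bound, the candidate hierarchy $C_d(h)$ along the notations below $\beta_i$ is total for \emph{every} $h$. This is a transfinite induction on the notation.
\item At each successor node, $k$ is declared in only after $\varphi^{C_d(h)}_k(k)$ is seen to converge. So the only possible error is an omission, which is a $\lSigma{0}{1}(h)$ event. Dovetailing over all nodes of the recursion and all $k$, any error is eventually observed.
\item Output the candidate bit by bit, and switch to $0$ forever once an error is observed. By transfinite induction, a guess that is never flagged outputs exactly $\emptyset^{(\beta_i)}$.
\end{enumerate}
Only with this is every $A_{i,n,\sigma}$ either $\emptyset^{(\beta_i)}$ or a finite string followed by zeros. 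After that, your relativized-Jockusch step goes through.

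\textbf{The paper's route.} The paper resolves soundness without opening up the reduction. It uses guesses $n+f$ instead of patches $\sigma$. The correct guesses, those with $n+f\ge g_b$, then form a final segment of $\omega$, and all of them yield $\emptyset^{(b)}$ via the single uniform index $c(b)$. Each guess is compared with all larger guesses. A disagreement kills the smaller guess, and its remaining bits are set to $0$; bits where a surviving guess diverges are filled in from larger guesses. A guess that is never killed agrees with the correct guesses wherever it converges. So each listed set is $\emptyset^{(b)}$ or eventually $0$, and relativized Jockusch finishes the proof, exactly as in your plan.

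\textbf{A smaller bug in your successor case.} For $m<n$ you wait for $\Phi^{\emptyset^{(\beta)}}_e(m)$ with no bound, so $y_{e,n}$ can be partial. Bound the computation at every $k\le m$ by $n+f(k)$ steps, as the paper does.
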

	
	It follows from the discussion above that $\SNE{\ideal{I}_\alpha}=\SME{\ideal{I}_\alpha}=\domm{\ideal{I}_\alpha}$. 
	
	Moreover, we characterise the reals $x$ that compute a list of $\hyp$.
	
	\begin{theorem*}[\cref{thm:hyplist}]
		A real $x$ computes a list of $\hyp$ if and only if $x$ computes a function which dominates $\hyp$ and $\ko$ is $\lSigma{0}{2}(x)$.
	\end{theorem*}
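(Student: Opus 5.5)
The proof has two directions. The backward direction will carry most of the work, by relativizing \cref{thm:list} uniformly along notations in $\ko$ and using the $\lSigma{0}{2}(x)$ definition of $\ko$ to discard bad guesses.

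\emph{Forward direction.} Suppose $z \leq_\T x$ is a list of $\hyp$. Then $g(n)=\max_{k\le n} z^{[k]}(n)$ is computable from $x$ and dominates every $z^{[k]}$, hence dominates all of $\hyp$. For the $\lSigma{0}{2}(x)$ definition of $\ko$ I would use the classical fact that a jump hierarchy along an ill-founded pseudo-notation is never hyperarithmetic. So $e\in\ko$ should hold iff some column $z^{[n]}$ codes a jump hierarchy along the (pseudo-)ordering of predecessors of $e$. To keep this $\lPi{0}{1}$ in $z^{[n]}$ rather than $\lPi{0}{2}$, I would require the coded object to be \emph{enriched}: alongside each level $Y_b=(Y_a)'$ it lists the predecessor structure below $e$ and, for each $i$ with $Y_b(i)=1$, a halting stage for $\Phi_i^{Y_a}(i)$. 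Then the positive instances are checked computably, and the negative instances ($\Phi_i^{Y_a}(i)\uparrow$, well-formedness of the ordering, limit clauses) are $\lPi{0}{1}$. For $e\in\ko$ this enriched $H_e$ is still hyperarithmetic, so it occurs in the list. Hence $e\in\ko \iff \exists n\, P(z^{[n]},e)$ with $P$ uniformly $\lPi{0}{1}$, which is $\lSigma{0}{2}(x)$.

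\emph{Backward direction.} Let $g\le_\T x$ dominate $\hyp$, and write $\ko=\{e:\exists s\,\forall t\, R(e,s,t)\}$ with $R$ computable in $x$. First I would prove a uniform version of \cref{thm:list}: from $g$ and a notation $e$ there is a procedure, uniform in $e$, which for $e\in\ko$ lists exactly the functions computable from $H_e$. This should follow by redoing the proof of \cref{thm:list} along $<_\ko$ below $e$, using $g$ to bound the settling-time and modulus functions at each level; these are all hyperarithmetic, so $g$ dominates them. The list of $\hyp$ then has columns indexed by $(e,s,i)$. Column $(e,s,i)$ copies the $i$-th column of the uniform procedure for $e$ as long as the procedure converges and $R(e,s,t)$ holds for all $t$ checked so far. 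If the copying ever stalls or the check fails at some $t$, the column switches permanently to a fixed computable tail.

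Every column is then total and hyperarithmetic. If $\forall t\,R(e,s,t)$, then $e\in\ko$, so the column is a genuine $H_e$-computable function. Otherwise the column is a finite piece of data followed by a computable tail. Conversely, every hyperarithmetic function is $H_e$-computable for some $e\in\ko$, and it appears at a witness $s$ for $e$.

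The main obstacle is the uniform relativized version of \cref{thm:list}. It requires running the domination argument through limit levels along $<_\ko$, uniformly in $e$, and ensuring that partial (non-total) outputs caused by bad guesses are detected and replaced in time. To handle this I would make the procedure emit values only after the relevant $g$-bounded search has converged, so that non-convergence is itself one of the failures triggering the switch to the computable tail.
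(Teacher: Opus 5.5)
Your proof is correct in outline and has the same skeleton as the paper's. A list yields a dominating function by diagonalization and defines $\ko$ via ``some column is a jump hierarchy along $e$''. Conversely, the finite-injury construction of \cref{thm:list} is run for every notation, and the $\lSigma{0}{2}(x)$ definition of $\ko$ neutralizes non-notations.

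The devices differ in two places. In the forward direction, the paper lists $\emptyset^{(a)}$ itself and collapses the $\lPi{0}{2}$ predicate ``$X=\emptyset^{(a)}$'' to $\lPi{0}{1}$ by clocking convergence with the dominating function ($[k+g(m)]$). You instead build Skolem witnesses into the hyperarithmetic object, so the matrix is $\lPi{0}{1}$ with no appeal to domination. Your explicit well-formedness clauses also isolate exactly where the fact that hyperarithmetic pseudo-hierarchies do not exist is needed. The paper uses this fact only implicitly, and its displayed $\chi(a)$ even holds vacuously when $a$ has fewer than two $<_{\ko}$-predecessors. In the backward direction, the paper pads with zeros at stages where its eventually-$1$ approximation reads $0$, and argues that nothing else happens off $\ko$ because $c(n)$ diverges there. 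You index columns by a $\lSigma{0}{2}$ witness $s$ and switch permanently to a computable tail once $\forall t\,R(e,s,t)$ is refuted. This makes bad columns computable whatever the procedure does on non-notations, which is more robust: for $n=2^m$ with $m\notin\ko$, $\varphi^A_{c(n)}(k)$ still halts whenever $\varphi_k(k)$ runs $A_1(k)$ steps without querying its oracle.

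Two corrections to your backward direction. First, the ``main obstacle'' is already handled by \cref{prop:higherjumps}. There $g_e$ is a single uniform modulus for $\emptyset^{(e)}$ (at limits, the join of the lower moduli), so one threshold $n$ with $n+g\geq g_e$ serves every level below $e$ at once. The construction of \cref{thm:list} is therefore uniform in $e$ as it stands. Guessing thresholds level by level would fail at limits, where infinitely many guesses would have to be right simultaneously.

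Second, emitting values only after a $g$-bounded search does not make stalls detectable. The running time of $\Phi_{c(e)}(n+g)$ is governed by the values of the oracle $n+g$ itself, so $g$ need not dominate it (compare the remark after \cref{prop:higherjumps}). On non-notations the procedure can also hang in unbounded subcomputations, e.g.\ a divergent $\varphi_i(k)$ below $3\cdot 5^i$. You do not need that mechanism. Before releasing bit $m$ of column $(e,s,i)$, check $R(e,s,t)$ for $t\leq m$, and keep checking larger $t$ while waiting for the procedure. If $s$ is a true witness then $e\in\ko$ and the procedure halts; otherwise the refutation is eventually found. So every column is total and either correct or computable.
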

	
	 By contrast we observe that, by definition, the set of weak lists of $\hyp$ is $\lSigma{1}{1}$, hence by Gandy's basis theorem there is some weak list $y$ with $y <_{\hyp} \ko$. This immediately shows that there are some weak lists of $\hyp$ which cannot compute any list of $\hyp$. 
	 
	 Finally, we extend the analysis of \cite{GO}, by showing that, for all countable Turing ideals $\ideal{I}$ which are closed under $\lhyp$, there are $\ideal{I}$-strongly null engulfing reals which cannot compute any weak list of $\ideal{I}$ (not even with the help of an oracle in $\ideal{I}$). 
	 
	 \begin{theorem*}[\cref{thm:sep}]
	 	Let $\ideal{I}$ be a countable Turing ideal which is $\lhyp$-downward closed. We have $\wl{\ideal{I}} \subsetneq \SNE{\ideal{I}}$.
	 \end{theorem*}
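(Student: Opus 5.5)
The inclusion $\wl{\ideal{I}} \subseteq \SNE{\ideal{I}}$ needs only a small adaptation of the listing argument recalled in the introduction (\cite[Theorem 6]{rupprecht}, or the measure version of \cite[Theorem 2]{nies}). Suppose $x \oplus z$ computes a weak list $w$ of $\ideal{I}$, with $z \in \ideal{I}$. Run the usual construction on all columns $w^{[n]}$, viewing each column as a code for a $\lPi{0}{2}$ null set, or returning $\emptyset$ when the column is not a valid code. This produces an $x\oplus z$-computable (strongly) null set that engulfs every null set coded in $\ideal{I}$. The extra columns of a weak list do no harm, since engulfing more sets is allowed. The oracle $z \in \ideal{I}$ can be absorbed as well, because $\SNE{\ideal{I}}$ is invariant under joining with members of $\ideal{I}$. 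So the real content of the theorem is strictness: I will build $x \in \SNE{\ideal{I}}$ such that, for every Turing functional $\Phi$ and every $z \in \ideal{I}$, $\Phi^{x \oplus z}$ is not a weak list of $\ideal{I}$.

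I plan to obtain $x$ by a forcing construction of the kind used in \cite{GO} for $\SNE{\ideal{I}} \subsetneq \SME{\ideal{I}}$, enriched with a perfect-tree (Sacks-like) component. A condition will pair a finite or hyperarithmetical approximation that guarantees engulfing with a perfect tree $T$ of possible values of $x$, where $T$ is hyperarithmetical in some member of $\ideal{I}$. The approximation part encodes, as in \cite{GO}, an open cover of the null sets coded by an initial segment of $\ideal{I}$. Since $\ideal{I}$ is countable, a sufficiently generic $x$ will handle every null set coded in $\ideal{I}$, which gives $x \in \SNE{\ideal{I}}$. Downward closure under $\lhyp$ is what keeps the trees and covers produced along the way inside $\ideal{I}$. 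This ensures that at every stage there remain elements of $\ideal{I}$ that have not yet been "seen".

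For the requirement $R_{\Phi,z}$, given a condition with tree $T$, I use fusion to thin $T$ to a subtree $T'$, hyperarithmetical in $T \oplus z$. On $T'$ each column $n$ either fails to be total everywhere, or, as a function of the branch, is constant on $[T']$, or is injective on $[T']$. The fusion also makes the branches of $T'$ avoid every real hyperarithmetical in $T' \oplus z$. Now let $y$ be a function hyperarithmetical in $T'\oplus z$, and hence in $\ideal{I}$, that differs from each of the countably many constant columns. Such a $y$ exists by diagonalizing against the sequence of constant values, which is itself hyperarithmetical in $T' \oplus z$.

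It remains to check that $y$ is not an injective column either. Suppose $y = (\Phi^{b\oplus z})^{[n]}$ for some branch $b$ on which column $n$ is injective. Then $b$ is the unique branch of $T'$ producing $y$, so $b$ is $\lDelta{1}{1}(T'\oplus z\oplus y)$ and hence hyperarithmetical in $T' \oplus z$. This contradicts the way the branches were chosen. Consequently every generic $x$ extending the condition has $y \notin \{(\Phi^{x\oplus z})^{[n]} : n\in\omega\}$, and $R_{\Phi,z}$ is met.

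The main obstacle I expect is making the perfect-tree fusion compatible with the measure-theoretic engulfing conditions. Sacks-type trees tend to kill $\SNE{}$-genericity, so the thinning must preserve enough positive measure for the engulfing part of \cite{GO}. My first attempt would be to use trees of positive-measure closed sets in place of perfect trees: branching at splitting levels occurs within positive-measure pieces, and the dichotomy is carried out measure-theoretically ("constant on a positive-measure subset" versus "non-hyperarithmetical on a full-measure subset"), using Sacks' theorem that hyperarithmetically-computable reals form a null set relative to the oracle.
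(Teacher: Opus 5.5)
Your argument for the inclusion $\wl{\ideal{I}}\subseteq\SNE{\ideal{I}}$ is the standard one, modulo the usual trimming since validity of a code is not decidable; the paper also takes this inclusion from the literature. The strictness part, however, has a genuine gap, and it sits exactly at what you call the main obstacle. You never define a forcing whose generics lie in $\SNE{\ideal{I}}$ and in which your fusion step is available, and these two demands conflict.

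Suppose, as your requirement step needs, that replacing $T$ by a perfect subtree hyperarithmetical in a member of $\ideal{I}$ always gives an extension. Then the generic is $\ideal{I}$-dominated. Indeed, if a condition forces $\Phi^{x\oplus z}$ total, every node of $T$ has an extension in $T$ on which $\Phi(n)$ converges (otherwise a subtree forces divergence). So you can build $T'\leq_\T T\oplus z$ whose $n$-th splitting level consists of nodes at which $\Phi(n)$ has already converged. Then $T'$ forces $\Phi^{x\oplus z}$ to be majorized by a function in $\ideal{I}$, hence $x\notin\domm{\ideal{I}}\supseteq\SNE{\ideal{I}}$.

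Your fallback to positive-measure closed sets fails for the same reason, because random forcing is $\omega^\omega$-bounding in an equally effective way. If $P$ forces totality, pick $b_n$, hyperarithmetical in the code of $P$, such that the points of $P$ where $\Phi(n)$ does not converge within $b_n$ steps to a value $\leq b_n$ have measure $<2^{-n-2}\lambda(P)$. The remaining closed set has positive measure and forces $\Phi^{x\oplus z}\leq b\in\ideal{I}$. So the tree part would have to be constrained by the engulfing part, and then the thinning behind your constant/injective dichotomy is no longer justified. This is a missing idea, not a technicality.

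There are also two smaller problems. First, no perfect tree $T'$ can have all its branches avoid the reals hyperarithmetical in $T'\oplus z$, since its leftmost branch is one of them; you would instead need the generic to avoid $\ideal{I}$ by a density argument. Second, a fusion gives the constant/injective dichotomy only piecewise, above the nodes of each splitting level, not on all of $[T']$.

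The paper avoids trees altogether. It forces directly with engulfing conditions $(\sigma,R,q)$, where $\sigma$ is a finite stem of a name for an open set and $R$ is a nonempty $\lSigma{1}{1}(X)$ class of names for open sets of measure $q<1/2$ containing $\hat\sigma$. It meets the requirement for $\Phi$ by bounding the complexity of forced values. There are two cases:
\begin{itemize}
\item Some extension forces a bit of $\Phi(x_G)$ to diverge, because every convergent extension of the stem would push the measure to $1/2$.
\item Otherwise, the sets $A_0,A_1$ of triples (stem, measure slack, argument) at which the value $1-i$ can be blocked are disjoint $\lSigma{1}{1}(X)$ sets.
\end{itemize}
In the second case, a $\lDelta{1}{1}(X)$ separator $B\leq_\T X^{(\alpha)}$ computes every real that an extension forces to be a column of $\Phi(x_G)$. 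Here the extension is first arranged so that its stem uses up almost all of its measure. Hence $X^{(\alpha+1)}$, which lies in $\ideal{I}$ by $\lhyp$-closure, is never a column. Because the requirement is met by a definability bound rather than by fusion, it coexists with the engulfing part of the forcing.
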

	
	\section{Preliminaries}
	
	Our notation is mostly standard in computability theory. We use $\lambda$ to denote the Lebesgue measure on $\cantor$. If $\sigma$ is a finite string and $\tau$ is a finite or infinite string, we denote the concatenation of $\sigma$ and $\tau$ as $\concat{\sigma}{\tau}$. We write $\sigma \preceq \tau$ to denote that $\sigma$ is a prefix of $\tau$. If $\sigma \in 2^{<\omega}$, we write $[\sigma]$ to denote the basic clopen set in Cantor space given by $\{f \in \cantor : \sigma \prec f\}$. If $f,g \in \baire$, we write $f \geq g$ (read: $f$ \emph{majorizes} $g$) as an abbreviation for $\forall i \,\, (f(i) \geq g(i))$, and we write $f \geq^* g$ (read: $f$ \emph{dominates} $g$) as an abbreviation for $\exists n\,\, \forall i \geq n \,\, (f(i) \geq g(i))$. We write $\langle \rangle$ to denote a standard pairing function, i.e.\ a computable bijection $\omega^2 \rightarrow \omega$. This pairing function allows us to divide each real into countably many columns as follows: given $x \in \baire$ (or $x \in \cantor$), the $n$-th column of $x$ is $x^{[n]}$ defined as $x^{[n]}(k)=x(\langle n, k \rangle)$. We use columns to work with infinite joins: given a sequence $\seq{x_n}{n \in \omega}$ of reals, $x=\bigoplus_{n \in \omega} x_n$ is the unique real such that $x^{[n]}=x_n$. We write $\varphi_n$ to denote the $n$-th partial function of type $\omega \rightarrow \omega$. The Turing functional of type $\baire \rightarrow \baire$ induced by $\varphi_n$ is denoted $\Phi_n$. We denote the \emph{use} of the computation of the $e$-th computable function, with oracle $A$, on input $n$ as $\use{e, n, A}$. We adopt the convention that if $\varphi^A_e(n) \downarrow$, then $\use{e, n, A}$ is the maximum between $e$, $n$, the number of steps of computation necessary to see convergence, and the maximum index of $A$ queried during the computation. If $\varphi^A_e(n) \uparrow$, then $\use{e, n, A}=0$. A Turing ideal is a set $\ideal{I} \subseteq \baire$ which is closed under Turing join ($\oplus$) and is downward closed under Turing reduction (i.e.\ $(x \in \ideal{I} \land y \leq_\T x) \rightarrow y \in \ideal{I}$). We denote Kleene's set of notations for computable ordinals as $\ko$, and its relativization to oracle $x$ as $\ko^x$. If $a \in \ko^x$ and $x \in \cantor$ (or $x \in \baire$) we denote by $x^{(a)}$ the result of iterating the Turing jump along the computable well-order indexed by $a$, starting from $x$. Similarly, if $\alpha < \omega^x_1$ and $a \in \ko^x$ is a notation for $\alpha$, we may abuse notation and write $x^{(\alpha)}$ to denote any set or function Turing equivalent to $x^{(a)}$. We assume the reader is familiar with the basic notions of computability and hyperarithmetic theory. Standard references for these subjects are \cite{rogers} and \cite{sacks}.
	
	\subsection{Basic definitions}
	
	We now give definitions for the notions of Schnorr randomness, domination and listing relative to Turing ideals.
	
	\begin{definition}
		A \emph{representation} of a set $X$ is a partial map $\pi: \omega^\omega \to X$.  A \emph{$\pi$-name} for $x \in X$ (or often merely \emph{name}, when $\pi$ is clear from context) is $a \in \omega^\omega$ such that $\pi(a) = x$.
	\end{definition}
	
	We now give representations for the first levels of the Borel hierarchy on $\cantor$. For the rest of the paper, we think we have fixed a canonical bijection $i \colon \omega \rightarrow 2^{<\omega}$.
	
	\begin{definition}
		A name for $\bSigma{0}{1}$ set $U \subseteq 2^\omega$ is a function $a \in \baire$ with $U = \bigcup_{n \in \omega} [f(i(n))]$, and
		a name for a $\bPi{0}{2}$ set $X$ is a sequence of names for open sets $U_n$ such that $X = \bigcap_{n \in \omega} U_n$.
	\end{definition}
	
	Note that these representations both have $\baire$ as domain. For any $f \in \baire$, we denote by $\open{f}$ the open set named by $f$.
	
	We code real numbers as fast Cauchy sequences. For the following definition, let $p$ be a standard bijection from $\omega$ into $\mathbb{Q}$.
	
	\begin{definition}
		We define the representation $\pi_{\mathbb{R}} \colon \baire \rightarrow \mathbb{R}$ as follows: a name for a real number $r$ is a sequence $f$ such that $\lim_{i}p(f(i))=r$ and, for all $i < j$, $|p(f(i))-p(f(j))| < 2^{-i}$.
	\end{definition}
	
	This representation and its basic properties are well known, and we will deal with it quite informally.
	
	\begin{definition}
		A name for a \emph{Schnorr null} set $N \subseteq \cantor$ is a sequence $\seq{(f_n, \lambda_n)}{n \in \omega}$ such that $\seq{f_n}{n \in \omega}$ is a name for $N$ as a $\bPi{0}{2}$ set, $\lambda(\open{f_n}) = \pi_{\mathbb{R}}(\lambda_n)$ for every $n$, and $\lim_{n} \pi_{\mathbb{R}}(\lambda_n)=0$. 
	\end{definition}
	
	If $\ideal{I}$ is a Turing ideal, we say that an open (resp.\ $\bPi{0}{2}$, Schnorr null) set $A$ is coded in $\ideal{I}$ if $A$ has a name as an open (resp.\ as a $\bPi{0}{2}$, Schnorr null) set in $\ideal{I}$.
	
	\begin{remark}\label{rem:names}
		Given any name $x=\seq{(f_n, \lambda_n)}{n \in \omega}$ for a Schnorr null set $\bigcap_{n \in \omega} \open{f_n}$, there is a Turing equivalent name (a subsequence of $x$) $\seq{f_{n_k}, \lambda_{n_k}}{k \in \omega}$ with $\lambda_{n_k} \leq 2^{-k-1}$.
	\end{remark}
	
	In line with \cite{GO}, given a Turing ideal $\ideal{I}$, we will often prove results from the point of view of $\ideal{I}$-computability.
	
	\begin{definition}
		Let $\ideal{I}$ be a Turing ideal, $x, y \in \baire$. We write $x \leq^{\ideal{I}} y$ (read: $y$ $\ideal{I}$-computes $x$) if there is some $z \in \ideal{I}$ such that $x \leq_\T z \oplus y$.
	\end{definition}
	
	We are now ready to give definitions for the classes that we will work on.
	
	\begin{definition}
		A real $x$ is \emph{$\ideal{I}$-strongly null engulfing} if it can $\ideal{I}$-compute a name for a Schnorr null set $N$ which contains all the $\ideal{I}$-coded Schnorr null sets. We denote the class of $\ideal{I}$-strongly null engulfing reals as $\SNE{\ideal{I}}$.
	\end{definition}
	
	\begin{definition}
		A real $x$ is \emph{$\ideal{I}$-dominating} if it can $\ideal{I}$-compute a function $f$ in $\baire$ such that $f \geq^* g$ for every $g \in \ideal{I}$. We denote the class of $\ideal{I}$-dominating reals as $\domm{\ideal{I}}$.
	\end{definition}
	
	\begin{definition}
		A real $x$ is a \emph{weak list} of a countable ideal $\ideal{I}$ if for every $g \in \ideal{I}$ there is some $n$ with $x^{[n]}=g$, and it is a \emph{list} of $\ideal{I}$ if $\{x^{[n]} : n \in \omega\}=\ideal{I}$.
		
		We define the classes of \emph{$\ideal{I}$-weak listing} and \emph{$\ideal{I}$-listing} reals (denoted $\wl{\ideal{I}}$ and $\li{\ideal{I}}$, respectively) as consisting of those $x$ which can $\ideal{I}$-compute a weak list (resp.\ a list) of $\ideal{I}$.
	\end{definition}
	
	By definition, for every countable Turing ideal $\ideal{I}$, we have $\li{\ideal{I}} \subseteq \wl{\ideal{I}}$. A simple diagonalization suffices to show that $\wl{\ideal{I}} \subseteq \domm{\ideal{I}}$. Indeed, the argument in \cite[Theorem 2]{nies}, translated in terms of measure, yields $\wl{\ideal{I}} \subseteq \SNE{\ideal{I}}$. Results of \cite{GKT} show that we always have $\SNE{\ideal{I}} \subseteq \SME{\ideal{I}} \subseteq \domm{\ideal{I}}$. As we mentioned above, the proofs in the literature showing that $\domm{\mathsf{REC}} \subseteq \SNE{\mathsf{REC}}$ actually rely on the fact (due to Jockusch, see \cite{jockusch}) that $\domm{\mathsf{REC}} \subseteq \li{\mathsf{REC}}$. Summarizing the situation for the ideal of computable functions, we have $\li{\mathsf{REC}}=\wl{\mathsf{REC}}=\SNE{\mathsf{REC}}=\domm{\mathsf{REC}}$.
	
	\section{When dominating and listing coincide}
	
	We show that the inclusion $\domm{\ideal{I}} \subseteq \li{\ideal{I}}$ holds for all the ideals which can be obtained as the $\leq_\T$-downward closure of chains given by iterating the Turing jump (starting from $\emptyset$) along some initial segment of the computable ordinals. This is an extension of the classic result of Jockusch that $\domm{\mathsf{REC}}=\li{\mathsf{REC}}$.
	
	\begin{proposition}[Jockusch]\label{prop:complist}
		$\domm{\mathsf{REC}}=\li{\mathsf{REC}}$.
	\end{proposition}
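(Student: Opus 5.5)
We prove the two inclusions separately. The inclusion $\li{\mathsf{REC}} \subseteq \domm{\mathsf{REC}}$ is the easy diagonal argument. If $z$ is a list of $\mathsf{REC}$, with $z \leq^{\mathsf{REC}} x$ (so $z \leq_\T x$, since $\mathsf{REC}$-computation is just Turing computation), we set $f(i)=\max_{n\leq i} z^{[n]}(i)$. Then $f \leq_\T x$. Moreover, for any computable $g$ there is $n$ with $z^{[n]}=g$, and $f(i)\geq g(i)$ for all $i\geq n$, so $f$ dominates $g$.

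For the main inclusion $\domm{\mathsf{REC}} \subseteq \li{\mathsf{REC}}$, fix $f \leq_\T x$ dominating every computable function. First we replace $f$ by $\tilde f(i)=\max_{j\leq i} f(j)+i$, which is still $x$-computable and dominating, but is now increasing. The idea is to use $f$ as a clock. For each pair $(e,m)$ we define the column $z^{[\langle e,m\rangle]}$ as follows. On input $k$, we run $\varphi_e(j)$ for all $j\leq k$ for $\tilde f(\max(k,m))$ steps each. If all of these computations converge, we output $\varphi_e(k)$. Otherwise we declare the column dead and, from the first failing stage onward, output $0$.

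More precisely, to keep every column total and computable, the column copies $\varphi_e$ as long as every computation up to $k$ halts within the allotted time, and outputs $0$ from the first $k$ where this fails. Each column is then either equal to $\varphi_e$ (if the clock never fails) or equal to a finite string followed by zeros. In both cases it is computable: in the first case $\varphi_e$ is total, and in the second case the column is eventually zero. This shows $\{z^{[n]}\}\subseteq \mathsf{REC}$, and $z\leq_\T x$ uniformly.

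For the reverse containment, let $g=\varphi_e$ be total. The time function $t(k)=\max_{j\leq k}(\text{steps for }\varphi_e(j)\text{ to halt})$ is computable, so $\tilde f \geq^* t$. We then choose $m$ beyond the point where domination starts. Since $\tilde f$ is increasing, for every $k$ we have $\tilde f(\max(k,m))\geq t(\max(k,m))\geq t(k)$. Hence the column $\langle e,m\rangle$ never dies, and it equals $g$.

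The main obstacle is precisely this point: domination only holds from some unknown point on, so a single clock $\tilde f$ cannot be trusted at small arguments. The parameter $m$, combined with monotonicity of $\tilde f$, is what fixes this, by guessing the threshold and shifting the clock. Some further care is needed to make the dead columns still belong to $\mathsf{REC}$, so that $z$ is a genuine list rather than only a weak list. Padding with zeros after the first failure handles this.
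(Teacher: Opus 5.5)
Your proof is correct and follows essentially the same approach as the paper: $f$ serves as a clock (stopping rule) for $\varphi_e$, an extra parameter absorbs the finitely many arguments where domination may fail, and each column is padded with $0$ after its first timeout so that every column stays computable. The only difference is cosmetic. The paper shifts the clock additively ($n+f(m)$ steps), while you evaluate $\tilde f$ at $\max(k,m)$ against the nondecreasing time function $t$; in that step you actually use the monotonicity of $t$, not of $\tilde f$, so making $\tilde f$ increasing is harmless but unnecessary.
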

	\begin{proof}
		We only sketch the proof of $\domm{\mathsf{REC}} \subseteq \li{\mathsf{REC}}$. The converse inclusion is shown by a simple diagonalization.
		
		Suppose that $f \in \domm{\mathsf{REC}}$. Notice that if $\Phi_e(\emptyset)$ converges on all inputs (i.e.\ defines an element of $\baire$), then the function $u_e$ given by $n \mapsto \use{e,n, \emptyset}$ is total computable, so it is dominated by $f$. This allows us to use $f$ as a \emph{stopping rule}, as follows.
		
		We define a computable sequence of reals $\seq{x_{e,n}}{e,n \in \omega}$ by saying that $x_{e,n}(m)$ is computed by running $\varphi^{\emptyset}_e(m)$ for $n+f(m)$ steps. If, for some $m$ and $n$, the computation $\varphi^{\emptyset}_e(m)$ does not halt in $n+f(m)$ steps, we set $x_{e,n}(k)=0$ for every $k \geq m$. It is easy to see that $\seq{x_{e,n}}{e,n \in \omega}$ lists exactly the computable functions.
	\end{proof}
	By relativization, Proposition \ref{prop:complist} holds relative to any ideal $\ideal{I}$ with a top Turing degree.
	
	We now briefly review the notion of \emph{modulus of computation} for a real. This concept is key in understanding how one can extract computational power from functions which grow very fast. Moreover, we recall classical results about moduli of computation and hyperarithmetical sets, providing proofs for completeness.
	
	\begin{definition}
		Let $f \in \baire$. We say that $g \in \baire$ is a \emph{modulus of computation} for $f$ if, for all $h \geq g$ (equivalently, for all $h \geq^* g$) we have $f \leq_\T h$. We say that $g$ is a \emph{uniform} modulus of computation for $f$ if there is an index $e$ such that, for all $h \geq g$, $\Phi_e(h)=f$.
	\end{definition}
	
	We show that all the hyperarithmetical functions have uniform moduli of computation and, in particular, that each of the sets $\emptyset^{(\alpha)}$ with $\alpha < \ock$ has a \emph{self-modulus of computation}, i.e.\ a modulus of computation $g_{\alpha}$ with $g_{\alpha}=\emptyset^{(\alpha)}$.
	
	\begin{definition}
		We denote by $g_1$ the function $g_1(n)=\use{n, n, \emptyset}$.
	\end{definition}
	
	\begin{lemma}\label{lem:modofzeroprime}
		The function $g_1$ is computable in $\emptyset'$, and it is a uniform modulus of computation for $\emptyset'$.
	\end{lemma}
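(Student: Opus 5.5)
Two things have to be shown: that $g_1$ is computable in $\emptyset'$, and that from any $h \geq g_1$ we can uniformly recover $\emptyset'$. We use the standard presentation $\emptyset' = \{n : \varphi^{\emptyset}_n(n)\downarrow\}$. By the convention on $\use{\cdot}$, we have $g_1(n)=\use{n,n,\emptyset}$. If $\varphi^{\emptyset}_n(n)$ diverges this value is $0$. If it converges, the value is the maximum of $n$, the number of steps needed to see convergence, and the largest oracle index queried.

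\emph{Computability in $\emptyset'$.} Given $n$, ask $\emptyset'$ whether $n \in \emptyset'$. If not, output $0$. If so, run $\varphi^{\emptyset}_n(n)$ until it halts, which is guaranteed. Record the number of steps $s$ and the maximal query $q$; since the oracle is $\emptyset$, all queries are answered ``no''. Output $\max(n,s,q)$. This procedure is uniform, so $g_1 \leq_\T \emptyset'$.

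\emph{Uniform modulus.} Fix the index $e$ of the following procedure. With oracle $h$ and input $n$, run $\varphi^{\emptyset}_n(n)$ for $h(n)$ steps, answering every oracle query with $0$. Output $1$ if it has halted by then and $0$ otherwise. Now suppose $h \geq g_1$.
\begin{itemize}
\item If $n \in \emptyset'$, the halting time is at most $g_1(n) \leq h(n)$, so the simulation sees convergence and outputs $1$.
\item If $n \notin \emptyset'$, the computation never halts, so the output is $0$.
\end{itemize}
Hence $\Phi_e(h)=\emptyset'$ for every $h \geq g_1$, with the single index $e$.

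The parenthetical ``equivalently, for all $h \geq^* g$'' in the definition is automatic. If $h \geq^* g_1$, then $h$ agrees with some $h' \geq g_1$ up to a finite modification, so $\emptyset' \leq_\T h$ still holds, though not necessarily uniformly. The definition of uniform modulus only quantifies over majorants, so nothing more is needed there.

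No step is a real obstacle. The only point needing care is that the convention makes $g_1(n)$ bound the halting time, so a majorant of $g_1$ gives a correct step bound. The oracle-index part of the use is irrelevant here because the oracle is $\emptyset$, but it is harmless.
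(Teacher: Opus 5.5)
Your proof is correct and follows the paper's argument. You show $g_1 \leq_\T \emptyset'$ directly, and for $h \geq g_1$ you simulate $\varphi_n(n)$ for $h(n)$ steps, which works because the use convention makes $g_1(n)$ bound the halting time; you simply give more detail than the paper, which calls the first part immediate.
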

	\begin{proof}
		The fact that $g_1 \leq_\T \emptyset'$ is immediate by definition.
		
		To see that it is a uniform modulus of computation for $\emptyset'$, let $h \geq g_1$. For any $n$, to compute $\emptyset'(n)$ from $h$, we can simply try to run the computation $\varphi_n(n)$ for $h(n)$ steps. If that computation converges, then $n \in \emptyset'$. Otherwise we can safely say $n \notin \emptyset'$
	\end{proof}
	
	We now show by effective transfinite recursion that, for every $a \in \ko$, the set $\emptyset^{(a)}$ has a uniform modulus of computation. 
	
	\begin{proposition}[Folklore]\label{prop:higherjumps}
		There is a computable function $c \colon \omega \rightarrow \omega$ and a sequence $\seq{g_n}{n \in \omega}$ such that, for every $a \in \ko$, $c(a)$ is an index witnessing that $g_a$ is a uniform modulus for $\emptyset^{(a)}$. Moreover, if $a \in \ko$, then $g_a \equiv_{\T} \emptyset^{(a)}$.
	\end{proposition}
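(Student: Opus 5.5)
We define $g_a$ and $c(a)$ simultaneously by effective transfinite recursion on $\ko$, using the recursion theorem. Fix the standard conventions: $1$ is the notation for $0$, $2^b$ is the successor of $b$, and $3\cdot 5^e$ is the limit of the increasing sequence $\varphi_e(0) <_{\ko} \varphi_e(1) <_{\ko} \cdots$. Jump hierarchies are defined by $\emptyset^{(1)}=\emptyset$, $\emptyset^{(2^b)}=(\emptyset^{(b)})'$ and $\emptyset^{(3\cdot 5^e)}=\bigoplus_n \emptyset^{(\varphi_e(n))}$.

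Using the recursion theorem, we build a computable $c$. On input $a$, the program $c(a)$ reads an oracle $h$ and acts according to the form of $a$, calling $c$ on smaller notations. In each case we also specify $g_a$, so the pair $(g_a,c(a))$ is defined by recursion on $<_{\ko}$. Correctness is then proved by transfinite induction on $a\in\ko$.

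\emph{Base case.} For $a=1$, let $g_1$ be the zero function and let $c(1)$ output $\emptyset$.

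\emph{Successor case.} For $a=2^b$, we relativize Lemma~\ref{lem:modofzeroprime}. Set
\[
g_a(n)=\max\bigl(g_b(n),\, \use{n,n,\emptyset^{(b)}}\bigr).
\]
Given $h\ge g_a$, the program $c(a)$ first computes $\emptyset^{(b)}=\Phi_{c(b)}(h)$, which is valid because $h\ge g_b$. It then decides $n\in\emptyset^{(a)}$ by running $\varphi_n^{\emptyset^{(b)}}(n)$ for $h(n)$ steps. By our use convention, the use bounds the number of steps, so convergence within $h(n)$ steps is correctly detected.

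\emph{Limit case.} For $a=3\cdot5^e$, let $a_n=\varphi_e(n)$ and set
\[
g_a(m)=\max_{n\le m} g_{a_n}(m).
\]
Then $h\ge g_a$ implies that $h\ge^* g_{a_n}$ for every $n$, but not that $h\ge g_{a_n}$. This is where uniformity is delicate: the definition of modulus allows $h\ge^* g$, but the index $c(a_n)$ is only guaranteed to work for $h\ge g_{a_n}$. The fix is to use the shifted function $h_n(m)=h(\max(m,n))$. This is not quite majorizing for $m<n$, so instead I would set
\[
g_a(\langle n,m\rangle)\ge g_{a_n}(m)
\]
via the pairing function, i.e.\ code all the $g_{a_n}$ into disjoint columns of $g_a$ and take a suitable pointwise maximum. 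Then from $h\ge g_a$ one computes $h^{[n]}\ge g_{a_n}$ uniformly, and $c(a)$ outputs
\[
\bigoplus_n \Phi_{c(a_n)}\bigl(h^{[n]}\bigr)=\emptyset^{(a)}.
\]
The successor case similarly needs $g_a$ to dominate $g_b$ pointwise, which is handled by taking the max as above.

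\emph{Equivalence $g_a\equiv_\T\emptyset^{(a)}$.} Since $g_a\ge g_a$, we get $\emptyset^{(a)}=\Phi_{c(a)}(g_a)\le_\T g_a$. For the converse we show $g_a\le_\T\emptyset^{(a)}$ by induction. In the successor case, $g_b\le\emptyset^{(b)}$ and the use function relative to $\emptyset^{(b)}$ is computable from $(\emptyset^{(b)})'$. In the limit case, the columns are computed uniformly from $\emptyset^{(a)}=\bigoplus_n\emptyset^{(a_n)}$.

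\emph{Main obstacle.} This uniformity is the real point to verify. The reductions $g_b\le_\T\emptyset^{(b)}$ must be obtained by indices that are themselves given by a computable function of $b$, so that in the limit case they can be applied uniformly across the columns. I therefore strengthen the induction hypothesis to produce a computable $d$ with $g_a=\Phi_{d(a)}(\emptyset^{(a)})$, defined by the same effective transfinite recursion. Both $c$ and $d$ are obtained from a single application of the recursion theorem. The remaining bookkeeping is that $c(a)$ does not depend on the notation chosen beyond $a$ itself.
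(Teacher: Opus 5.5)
Your proposal is correct and follows essentially the same route as the paper: an effective transfinite recursion via the recursion theorem, with Lemma~\ref{lem:modofzeroprime} relativized at successors and $g_a=\bigoplus_n g_{a_n}$ at limits, so that $h\ge g_a$ gives $h^{[n]}\ge g_{a_n}$ columnwise. At successors you take the pointwise max of $g_b$ and the use function where the paper takes the join $g_b\oplus\ell_a$, which is immaterial, and your computable $d$ with $g_a=\Phi_{d(a)}(\emptyset^{(a)})$ spells out the uniformity the paper only asserts in one line.
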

	\begin{proof}
		We make use of auxiliary functions $\ell_{2^n}(k)= \use{k, k,  \emptyset^{(n)}}$, where, if $n \notin \ko$, we stipulate that $\emptyset^{(n)}=\emptyset$. If $n \in \ko$, then $\emptyset^{(n)}$ has the usual meaning.
		
		Now we can define the function $c$ and the sequence $\seq{g_n}{n \in \omega}$. Notice that $c(0)$ can be taken to be any index for a function computing $\emptyset$ (ignoring its oracle) and we can set $g_0$ to be the constant $0$ function. The function $g_1$ and the index $c(1)$ are those of Lemma \ref{lem:modofzeroprime}. We define:		
		\begin{equation*}
			g_n=
			\begin{cases}
				g_m \oplus \ell_n & \text{if } n=2^m \\
				\bigoplus_{i \in \omega} g_{e(i)} & \text{if } n=3 \cdot 5^e \\
				g_0 & \text{otherwise.}
			\end{cases}
			\,\,\,\,\,\,\text{and}\,\,\,\,\,\,
			c(n)=
			\begin{cases}
				h(n) & \text{if } n=2^m \\
				j(n) & \text{if } n=3 \cdot 5^e \\
				e_0 & \text{otherwise.}
			\end{cases}
		\end{equation*}
		where $e_0$ is an index for an everywhere diverging function, and the computable functions $h$ and $j$ are defined as follows. If $n=2^m$ for some $m$, the computation of $\varphi^A_{h(n)}(k)$ interprets $A$ as $A_0 \oplus A_1$, runs $\varphi^{A_{0}}_{c(m)}$ as long as needed to run the computation of $\varphi^{\Phi_{c(m)}(A_0)}_{k}(k)$ with use given by $A_1(k)$. If the latter computation halts within these steps, the entire computation halts with value $1$. Otherwise, it halts with value $0$. It does not matter how $h$ is defined on values $n$ which are not powers of $2$, so we may assume that $h(n)=e_0$ in those cases. Similarly, we set $j(n)=e_0$ if $n$ is not of the form $3 \cdot 5^e$ for some $e$. If $n= 3 \cdot 5^e$ for some $e$, $\varphi^A_{j(n)}(k)$ interprets $A$ as $\bigoplus_{i \in \omega}A_i$, and, if $k=\langle i, m\rangle$, computes $\varphi^{A_i}_{c(i)}(m)$.
		
		Note that the function $c$ is defined using the Recursion Theorem.
		
		By induction on $<_{\ko}$, for every $a \in \ko$, we have that $c(a)$ is the index for a reduction $\emptyset^{(a)} \leq_\T g_a$. To see that we also have $g_a \leq_\T \emptyset^{(a)}$, we argue again by transfinite induction. Indeed, notice that if $a=2^n$ is a notation in $\ko$, then $\ell_a \leq_\T \emptyset^{(a)}$, and this is uniform in $a$. Hence, by definition of the $g_a$'s we have $g_a \equiv_{\T} \emptyset^{(a)}$ for every $a \in \ko$.
	\end{proof}
	
	\begin{remark}
		There is a crucial difference between the computation for $\emptyset'$ sketched in Lemma \ref{lem:modofzeroprime} and the computations for longer iterations of the Turing jump in Proposition \ref{prop:higherjumps}. In the former, we use $g_1$ as a bound on the total number of computation steps. In other words, $c(1)$ is the index for a truth table reduction.
		
		There is no analogous bound in the reductions of Proposition \ref{prop:higherjumps}. Consider for example the definition of $\varphi^A_{h(n)}(k)$, with $n=2^m$. It is possible that the computation $\Phi_{c(m)}(A_0)$ diverges, so that we will never be able to compute enough bits of the candidate oracle to carry out $A_1(k)$ steps of the computation $\varphi^{\Phi_{c(m)}(A_0)}_{k}(k)$. In this case the whole computation diverges.
		
		This is unavoidable, meaning that there is no\ there is no computable sequence $c'$ satisfying Proposition \ref{prop:higherjumps} and such that, for every $n$, $c'(n)$ is the index of a truth table reduction. This follows from Theorems \ref{thm:hyplist} and \ref{thm:sep}, together with the inclusion $\SNE{\hyp} \subseteq \domm{\hyp}$.
	\end{remark}
	
	\begin{corollary}
		Every hyperarithmetical function has a uniform modulus of computation.
	\end{corollary}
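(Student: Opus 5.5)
The plan is to derive this directly from Proposition \ref{prop:higherjumps}. Let $f$ be hyperarithmetical. By the standard characterization of $\hyp$, there is a notation $a \in \ko$ with $f \leq_\T \emptyset^{(a)}$; fix an index $d$ with $\Phi_d(\emptyset^{(a)}) = f$. Proposition \ref{prop:higherjumps} supplies $g_a$ and the index $c(a)$ such that $\Phi_{c(a)}(h) = \emptyset^{(a)}$ for every $h \geq g_a$.

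Compose the two reductions. By the s-m-n theorem, choose an index $e$ with $\Phi_e(h) = \Phi_d(\Phi_{c(a)}(h))$ for every oracle $h$. If $h \geq g_a$, then $\Phi_{c(a)}(h) = \emptyset^{(a)}$, and hence $\Phi_e(h) = \Phi_d(\emptyset^{(a)}) = f$. So the single index $e$ witnesses that $g_a$ is a uniform modulus of computation for $f$.

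The definition of modulus also has the parenthetical variant with $h \geq^* g$. Any uniform modulus $g$ gives non-uniform reductions for such $h$: if $h \geq^* g$, then $h$ differs from some $h' \geq g$ at only finitely many places, so $f \leq_\T h' \equiv_\T h$. The statement only asks for existence of a uniform modulus, and the $\geq$ version is the operative one there, so nothing more is needed.

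There is no real obstacle. The only points to check are that the composition of Turing functionals is total on the relevant oracles, which holds because $\Phi_{c(a)}(h)$ is total and equals $\emptyset^{(a)}$, and the appeal to the fact that every hyperarithmetical set is computable from some $\emptyset^{(a)}$ with $a \in \ko$. Moreover, because $g_a \equiv_\T \emptyset^{(a)}$, the modulus obtained is itself hyperarithmetical and can be chosen of the same degree as $\emptyset^{(a)}$.
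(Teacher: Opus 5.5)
Your proposal is correct and is essentially the paper's argument. The paper simply notes that having a uniform modulus of computation is closed downward under $\leq_\T$, which is exactly your composition of $\Phi_d$ with $\Phi_{c(a)}$, and then applies Proposition~\ref{prop:higherjumps} to the sets $\emptyset^{(a)}$ with $a \in \ko$.
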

	\begin{proof}
		This follows immediately from the observation that the class of functions with a (uniform) modulus of computation is $\leq_\T$-downward closed.
	\end{proof}
	
	It is known that the hyperarithmetical functions are exactly those which have a modulus of computation, see \cite{hypenc}.
	
	We can finally prove the main result of this section.
	
	\begin{theorem}\label{thm:list}
		For any $\alpha < \ock$, let $\ideal{I}_{\alpha}=\{x \in \baire : \exists \beta < \alpha \,\, (x \leq_\T \emptyset^{(\beta)})\}$. Any $f \in \baire$ which dominates all the functions of $\ideal{I}_{\alpha}$ can compute a list of $\ideal{I}_{\alpha}$, so in particular $\domm{\ideal{I}_{\alpha}}=\li{\ideal{I}_\alpha}$.
	\end{theorem}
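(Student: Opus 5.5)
We split according to whether $\alpha$ is a successor or a limit; the case $\alpha=0$ is vacuous. In both cases we first extract from $f$ the jumps $\emptyset^{(\beta)}$, $\beta<\alpha$, using the moduli $g_b$ of Proposition~\ref{prop:higherjumps}. We then use $f$ a second time, as a stopping rule in the style of Jockusch's Proposition~\ref{prop:complist}, to list the functions computable from them.

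\emph{Successor case, $\alpha=\beta+1$.} Here $\ideal{I}_\alpha$ has the top degree $\emptyset^{(\beta)}$. Fix a notation $b$ for $\beta$. Since $g_b\equiv_\T\emptyset^{(b)}$, we have $g_b\in\ideal{I}_\alpha$, so $f\geq^* g_b$. As $g_b$ is a modulus of computation for $\emptyset^{(b)}$ (the definition allows $h\geq^* g$), we get $\emptyset^{(\beta)}\leq_\T f$. Moreover $f$ dominates every $\emptyset^{(\beta)}$-computable function. In particular it dominates all the use functions $m\mapsto\use{e,m,\emptyset^{(\beta)}}$ of total reductions. So the relativization of Proposition~\ref{prop:complist} to $\emptyset^{(\beta)}$, with $\emptyset^{(\beta)}$ computed from $f$, yields an $f$-computable list of $\ideal{I}_\alpha$.

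\emph{Limit case.} Fix a notation $a=3\cdot5^e$ for $\alpha$ and let $b_i=\varphi_e(i)$, so the $\beta_i=|b_i|$ are cofinal in $\alpha$. Then $f\geq^* g_{b_i}$ for every $i$, but the threshold beyond which $f$ majorizes $g_{b_i}$ is not uniform in $i$. To get uniformity, set $h_N(k)=\max(f(k),N)$. If $N$ exceeds both that threshold and all values of $g_{b_i}$ below it, then $h_N\geq g_{b_i}$ everywhere. The condition is upward closed in $N$, so $h_N\geq g_{b_i}$ for all $N\geq N_i$. Now define the diagonal
\[
Y_{i,\sigma}(k)=\sigma(k)\ \text{for } k<|\sigma|,\qquad Y_{i,\sigma}(k)=\Phi_{c(b_i)}(h_k)(k)\ \text{for } k\geq|\sigma| .
\]
This is uniformly $f$-computable as a partial function. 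For $|\sigma|\geq N_i$ it is total and equals $\emptyset^{(\beta_i)}$; every total $Y_{i,\sigma}$ is a finite variant of $\emptyset^{(\beta_i)}$, hence lies in $\ideal{I}_\alpha$. The list will then consist of the columns $x_{i,\sigma,e',n}$. Each column runs $\varphi^{Y_{i,\sigma}}_{e'}(m)$ with the stopping rule $n+f(m)$ and outputs $0$ from the first failure on, as in Proposition~\ref{prop:complist}. Such a column is either a finite string followed by zeros or equal to $\Phi_{e'}(Y_{i,\sigma})$, so it lies in $\ideal{I}_\alpha$. Conversely, every $x\leq_\T\emptyset^{(\beta_i)}$ appears, since $f$ dominates the relevant use function, which is $\emptyset^{(\beta_i)}$-computable.

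\emph{Main obstacle.} The difficulty is that the stopping rule must also cover the computation of $Y_{i,\sigma}$ itself. For short $\sigma$ that computation may diverge, and its running time depends on $f$, so it is not a priori dominated by anything in the ideal. My first attempt is to bound the time needed to compute $\Phi_{c(b_i)}(h_k)(k)$ by a function computable from $\emptyset^{(\beta_{i+1})}$, hence dominated by $f$. The monotonicity of the reductions $c(b)$ in the oracle's step bounds, visible in the construction of Proposition~\ref{prop:higherjumps}, should control how running time grows with $h_k$. Failing that, I would fold the evaluation of $Y_{i,\sigma}$ into the clock, timing each column by $n+f(m)$ steps of the combined computation and padding with $0$s upon timeout. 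I would then argue that for $|\sigma|$ and $n$ large enough this combined running time is dominated by $f$.
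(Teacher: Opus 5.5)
Your successor case is fine. The diagonal oracle $h_k=\max(f,k)$ is also a good idea: it gives that every total $Y_{i,\sigma}$ is a finite variant of $\emptyset^{(\beta_i)}$, equal to it when $\sigma$ is the correct prefix of length $\geq N_i$.

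\textbf{The gap.} The proof is not finished, and the gap is exactly the point you call the main obstacle. A column $x_{i,\sigma,e',n}$ is partial whenever $\varphi_{e'}$ queries a position in $[|\sigma|,N_i)$ where $\Phi_{c(b_i)}(h_k)(k)$ diverges. So you have not produced a total $f$-computable list, and neither proposed repair works.

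\emph{Why the repairs fail.} The reductions of Proposition~\ref{prop:higherjumps} use oracle values as step counts. $\Phi_{c(1)}(h)(k)$ runs $\varphi_k(k)$ for $h(k)$ steps, and at a successor notation the inner computation is run for $A_1(k)$ steps, a value of $h$ at an argument $>k$. Hence, for infinitely many $k$, evaluating $\Phi_{c(b_i)}(h_k)(k)$ takes at least $f$ evaluated at some argument $\geq k$. For, say, increasing $f$, this time is $\geq f(k)$ infinitely often. It therefore cannot be bounded by any member of $\ideal{I}_\alpha$, since $f$ dominates all of them, so repair (1) is impossible. Repair (2) fails for the same reason. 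Take the correct $\sigma$ and the identity reduction. The combined time to produce bit $m$ involves $f$ at arguments $\geq m$, or at whatever positions $\varphi_{e'}$ queries. For rapidly increasing $f$ this exceeds $n+f(m)$ infinitely often. The column meant to equal $\emptyset^{(\beta_i)}$ is then cut off and becomes eventually zero. The running time depends on $f$ itself, so nothing makes it dominated by $n+f$.

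\textbf{The missing step.} Make the oracles total \emph{before} applying Jockusch's stopping rule, so that the clock counts only steps of $\varphi_{e'}$, as in Proposition~\ref{prop:complist}. In your setting this is easy. For $k\geq|\sigma|$, let $Y_{i,\sigma}(k)$ be the first value found by dovetailing $\Phi_{c(b_i)}(h_{k'})(k)$ over all $k'\geq k$.
\begin{itemize}
\item Every $k'\geq N_i$ converges, so $Y_{i,\sigma}$ is total.
\item For $k\geq N_i$, every candidate $k'$ has $h_{k'}\geq g_{b_i}$, so every returned value is correct.
\end{itemize}
Thus each $Y_{i,\sigma}$ is a total, uniformly $f$-computable finite variant of $\emptyset^{(\beta_i)}$, and your final paragraph then goes through unchanged.

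\textbf{Comparison with the paper.} The paper reaches the same intermediate point by a finite-injury conflict resolution among the oracles $n+f$. An index in conflict with a larger one has its undefined bits set to $0$. An index never in conflict copies bits from the largest currently convergent index. This yields total $x_{b,n}$, each either eventually $0$ or equal to $\emptyset^{(b)}$. Your diagonal $h_k$, once totalized, would give a simpler route to the same point.
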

	\begin{proof}
		Fix $\alpha < \ock$, a notation $a \in \ko$ for $\alpha$, and $f$ dominating all functions in $\ideal{I}_{\alpha}$.

		Notice that if $\alpha=\beta+1$ is a successor, then $\ideal{I}_{\alpha}$ has a top element, namely $\emptyset^{(\beta)}$. In this case the result follows from (the relativization of) Proposition \ref{prop:complist}, so we may assume (although it is not necessary) that $\alpha$ is a limit ordinal.
		
		In view of (the relativization of) Proposition \ref{prop:complist}, it is sufficient to show that $f$ can compute a sequence $\seq{y_n}{n \in \omega} \subseteq \ideal{I_{\alpha}}$ which lists the sets $\seq{\emptyset^{(\beta)}}{\beta <\alpha}$.
		
		To show that, notice that the initial segment of $\ko$ given by $R_a=\{b \in \ko : b <_{\ko} a\}$ is computably enumerable, and for all $b \in R_a$, $f$ dominates the function $g_b$ (cf.\ Proposition \ref{prop:higherjumps}). Computing $\emptyset^{(b)}$ from $f$ is at first sight a non-uniform procedure, as it looks like we would need to know some sequence of numbers number $\seq{m_b}{b \in R_a}$ such that $f(i) \geq g_b(i)$ for every $i \geq m_b$ and every $b \in R_a$. We circumvent this problem with a construction like that of Proposition \ref{prop:complist}, with a ``finite injury'' flavor. 
		For $n \in \omega$, let $n + f$ denote the function $m \mapsto n +f(m)$. For every $b \in R_a$, since $f \geq^*g_b$, there is some $n$ such that $n+f \geq g_b$.
		
		We define a sequence $\seq{x_{b, n}}{b \in R_a, n \in \omega}$ as follows. We think that, for any fixed $b$, the bits of the reals the sequence $\seq{x_{b,n}}{n \in \omega}$ are being computed in parallel. We first attempt to make it so that, for every $n$ and $m$, $\varphi^{n+f}_{c(b)}(m)=x_{b,n}(m)$, but, if $n<k$, we ``trust'' the computation with oracle $k+f$ more than that with oracle $n+f$. We give the formal definition for a single $b$, as computations of reals indexed with different ordinal notations never interact.
		
		Fix $b \in R_a$. We compute in stages the sequence $\seq{x_{b,n}}{n \in \omega}$, as well as an auxiliary non-decreasing sequence $\seq{t_s}{s \in \omega}$ of numbers, which is needed to keep track of the $n$ such that we currently believe $n+f \geq g_b$. We start with $x_{b,n}[0]=\emptyset$ and $t_0=0$. At stage $s+1$, for every $b \in R_a$, every $t_s < n< s+1$ and every $m <s+1$ we check if $\varphi^{n+f}_{c(b)}(m)[s+1] \downarrow$.
		For a fixed $m$, if there are $t_s <n < k<s+1$ such that $\varphi^{k+f}_{c(b)}(m)[s+1]$ and $\varphi^{n+f}_{c(b)}(m)[s+1]$ converge to two different values, then we stop believing that $n+f$ majorizes $g_b$. If this happens, we say that $n$ and $k$ are in conflict (over $m$). In this case, we set all undefined bits of $x_{b,n}$ to $0$, and we ensure that $t_s \geq n$. We then let $n_m=\max\{n :\varphi^{n+f}_{c(b)}(m)[s+1] \downarrow\}$ and, for every $k \leq n_m$ such that $\varphi^{k+f}_{c(b)}(m)[s+1] \uparrow$,  if $k$ has never been in conflict with any $n>k$, we set $x_{b,k}(m)[s+1]=\varphi^{n_m+ f}_{c(b)}(m)[s+1]$. We let $t_{s+1}$ be the least $t \geq t_s$ such that, if $n$ is in conflict with some $k>n$ at stage $s+1$, then $t>n$.
		
		\begin{claimn*}
			The sequence $t_s$ is bounded, and, if $t= \lim_{s}t_s$ and $k>t$, then $\Phi^{k+f}_{c(b)}=\emptyset^{(b)}$.
		\end{claimn*}
		\begin{proof}[Proof of claim.]
			Let $m$ be such that $n+f$ majorizes $g_b$ for every $n \geq m$. By Proposition \ref{prop:higherjumps}, $\Phi^{n+f}_{c(b)}= \emptyset^{(b)}$ for all $n \geq m$, hence there will be no conflicts between indices above $m$. This immediately implies that $t_s \leq m$, so that $t_s$ has a limit, say $t$. Let $k > t$. By construction, $k$ is never in conflict with any $k'>k$, so in particular it is never in conflict with $m$. This implies that $\Phi^{k+f}_{c(b)}= \emptyset^{(b)}$.
		\end{proof}
		
		By construction the sequence $\seq{x_{b,n}}{n \leq t}$ consists of functions which are eventually $0$. Putting everything together we get that for every $b \in R_a$ and every $n \in \omega$, $x_{b,n}$ is either eventually $0$ or $\emptyset^{(b)}$, and for every $b$ there is $m$ such that $x_{b,m}=\emptyset^{(b)}$.
	\end{proof}
	
	\section{Domination and (weak) listing in $\hyp$}
	
	We show that $x$ computes a list of $\hyp$ if and only if $x$ computes a function which dominates $\hyp$ and $\ko$ is $\lSigma{0}{2}(x)$. The right to left implication is proved via an extension of the proof of Theorem \ref{thm:list} and it was previously known to Noam Greenberg and Dan Turetsky. This result is an ingredient in Turetsky's proof of a limitative result on arboreal forcing over $\hyp$ (see \cite[Appendix A]{GO}). As we mentioned in the introduction, this readily implies that $\li{\hyp} \subsetneq \wl{\hyp}$. Since $\wl{\hyp} \subseteq \domm{\hyp}$, this shows that the condition that $\ko$ is $\lSigma{0}{2}(x)$ cannot be omitted from the characterization of $x \in \li{\hyp}$. We also give a direct proof that $\wl{\hyp} \subsetneq \domm{\hyp}$, anticipating the technique we use to prove of Theorem \ref{thm:sep}.
	
	\begin{theorem}\label{thm:hyplist}
		A real $x$ computes a list of $\hyp$ if and only if $x$ computes a function which dominates $\hyp$ and $\ko$ is $\lSigma{0}{2}(x)$. In particular, $x \in \li{\hyp}$ if and only if $x \in \domm{\hyp}$ and there is $y \in \hyp$ such that $\ko$ is $\lSigma{0}{2}(x \oplus y)$.
	\end{theorem}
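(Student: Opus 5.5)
The plan is to prove the two directions separately, and then deduce the ``in particular'' clause by relativizing to oracles in $\hyp$.

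\emph{Left to right.} Suppose $x$ computes a list $z$ of $\hyp$. Diagonalizing against the columns, $h(n)=\max_{i\leq n} z^{[i]}(n)+1$ is $x$-computable and dominates every hyperarithmetical function, so $x \in \domm{\hyp}$. For the $\lSigma{0}{2}(x)$ bound on $\ko$, I would use the uniform moduli $g_a$ of Proposition \ref{prop:higherjumps}. For $a \in \ko$ we have $g_a \in \hyp$, hence $g_a = z^{[n]}$ for some $n$. Conversely, I want $a \in \ko$ to be equivalent to the existence of $n$ such that $z^{[n]}$ ``certifies'' $a$. The worry is that $z^{[n]}$ is hyperarithmetic and so cannot by itself certify well-foundedness of an ill-founded notation. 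So I would define the certificate as follows: $z^{[n]}$ codes a hyperarithmetic jump hierarchy along the $<_{\ko}$-predecessors of $a$, together with a Turing jump condition. This is a $\lPi{0}{1}(z^{[n]})$ (arithmetic, at worst) condition. The key point is that a hyperarithmetic function that is a jump hierarchy along $\{b: b<_{\ko} a\}$, for $a$ in the c.e.\ tree of notations, forces $a\in\ko$. Indeed, by Harrison/Spector, hierarchies along ill-founded pseudo-notations exist only on the non-standard part, and a hyperarithmetic one with linear structure that is also ``$\ko$-like'' must be well-founded, since the $\hyp$ jump hierarchies cannot go beyond $\ock$ (Spector's uniqueness argument plus the fact that the well-founded part of a $\hyp$ pseudo-hierarchy is not $\hyp$).

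To keep the complexity at $\lSigma{0}{2}(x)$, I would instead use a simpler approach: $a\in\ko$ iff $\exists n$ such that the restriction of $<_{\ko}$ below $a$ embeds into an ordinal coded by $z^{[n]}$ via a map recorded in $z^{[n]}$. Checking that the map is order-preserving and that the target is a well-order is the issue. Since the list contains only hyperarithmetic reals, I would take the target to be a \emph{linear order} in $\hyp$ admitting a $\hyp$ descending-sequence-free property. That property is not decidable, so I would fall back on checking the embedding into an order of the form coded by a jump hierarchy, $\lPi{0}{1}$ in $z^{[n]}$, which is $\lSigma{0}{2}(x)$ overall after quantifying over $n$. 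This step is the main obstacle, and the precise certificate needs care.

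\emph{Right to left.} Let $f\leq_\T x$ dominate $\hyp$, and let $\ko=\{a: \exists s\,\forall t\, R(a,s,t)\}$ with $R$ computable in $x$. I would run the construction of Theorem \ref{thm:list} for every $a$ in parallel, in place of $b\in R_a$. For pairs $(a,s)$ I would build $x_{a,s,n}$ exactly as before, but additionally freeze $x_{a,s,n}$ to be eventually $0$ as soon as some $t$ witnesses $\neg R(a,s,t)$. If $a\in\ko$ with witness $s$, the Claim gives some $n$ with $x_{a,s,n}=\emptyset^{(a)}$. Otherwise, every column is either eventually $0$ or equals $\Phi^{k+f}_{c(a)}$ for a total computation. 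The danger is that such a column is a non-hyperarithmetic total output. I would handle this by also requiring consistency with lower notations (each $x_{a,s,n}$ must agree with the jump of its predecessors as computed in the listed columns), and by accepting only genuine notations, which is what the $\lSigma{0}{2}$ guard achieves. Then the relativized Proposition \ref{prop:complist} applied to the listed $\emptyset^{(a)}$'s closes the sequence downward, yielding exactly $\hyp$. Finally, the ``in particular'' clause follows since $\hyp$-computation allows joining with $y$, and the characterization relativizes.
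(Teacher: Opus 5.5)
The gap is in the $\lSigma{0}{2}(x)$ half of the left-to-right direction. That half is the real content of this direction, and you leave it open yourself (``the precise certificate needs care''). Your candidate certificate says that $z^{[n]}$ is a jump hierarchy along the $<_{\ko}$-predecessors of $a$, but this is not $\lPi{0}{1}(z^{[n]})$. The clause $X_{2^b}=(X_b)'$ contains the $\lPi{0}{2}$ half $\forall k\,(k\in X_{2^b}\rightarrow\exists s\,\varphi^{X_b}_k(k)[s]\downarrow)$, so prefixing $\exists n$ gives only $\lSigma{0}{3}(x)$. The alternative, embedding the predecessors of $a$ into an ordinal coded by $z^{[n]}$, needs a well-foundedness check, which is $\lPi{1}{1}$, as you note. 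So the argument as proposed yields at best a $\lSigma{0}{3}(x)$ definition of $\ko$. You never use the dominating function built in the first half, and that is exactly what is missing.

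The paper closes the gap with the stopping-rule trick of Proposition \ref{prop:complist} and Theorem \ref{thm:list}:
\begin{itemize}
\item Suppose $a\in\ko$ and $z^{[n]}$ is the true hierarchy. Send each instance of the $\lPi{0}{2}$ clauses to the halting time of the corresponding computation. This function is hyperarithmetic, so the $x$-computable $g$ dominates it, and for some constant $k$ all these searches halt within $k+g(\cdot)$ steps.
\item Thus $a\in\ko$ iff $\exists n,k$ such that every clause holds with its search cut off at $k+g(\cdot)$. This matrix is $\lPi{0}{1}(x)$, and since the bounded version implies the unbounded one, there are no false positives.
\end{itemize}
An alternative fix: the halting-time functions are themselves hyperarithmetic and so occur among the columns. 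Building them into the certificate makes both halves of each jump clause $\lPi{0}{1}$. Your mention of the moduli $g_a$ points this way but is never carried out.

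Two smaller points:
\begin{itemize}
\item Your reason why a hyperarithmetic hierarchy below $a$ forces $a\in\ko$ is muddled, since the point is precisely that hyperarithmetic pseudo-hierarchies do not exist. The standard argument is that if $a\notin\ko$ one extracts columns $X_i$ with $X_{i+1}'\leq_\T X_i$. By an Enderton--Putnam induction, every member of such a sequence computes all hyperarithmetic sets, so none of them is hyperarithmetic.
\item Your right-to-left direction matches the paper's: the construction of Theorem \ref{thm:list} for every candidate notation under the $\lSigma{0}{2}(x)$ guard, then the closure of Proposition \ref{prop:complist}. Freezing refuted pairs $(a,s)$ is a clean way to implement the guard. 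However, the ``danger'' you raise does not occur: for $a\notin\ko$ every $s$ is eventually refuted, so every $x_{a,s,n}$ is eventually $0$. No consistency requirement with lower notations is needed.
\end{itemize}
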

	\begin{proof}
		Let $f=\seq{f_n}{n \in \omega} \leq_\T x$ be a list of $\hyp$. An easy diagonalization shows that $f$, so also $x$, computes a function $g$ which dominates $\hyp$. Now note that $a \in \ko$ if and only if there is some $n$ such that $f_n= \emptyset^{(a)}$. By \cite[Theorem II.4.2]{sacks}, the expression $X = \emptyset^{(a)}$, where $a \in \ko$, is $\lPi{0}{2}$. This predicate can be seen as saying that, intuitively, the relationships between the columns of $X$ are those expected from a jump hierarchy, e.g.\ that, for every $b<_{\ko} 2^b <_{\ko} a$, $X^{[b]}=\Phi_k(X^{[2^b]})$, where $k$ is a fixed index which witnesses an $m$-reduction from $Y$ to $Y'$, and a similar statement for notations $3 \cdot 5^e \in \ko$ which denote limit ordinals. Expanding this out, we get that $a \in \ko$ if and only if 
		\[
		\chi(a) = \exists n \, \forall b,b', m \, \exists s \,\, ((b <_{\ko} b' <_{\ko} a) \rightarrow \varphi^{f^{[b']}_n}_{d(b,b')}(m)[s] \downarrow =f^{[b]}_n(m)).
		\]
		where $d$ is a total computable function (from, e.g.\ \cite[Lemma II.1.2]{sacks}) which associates $b <_{\ko} b'$ to an index for an $m$-reduction from $\emptyset^{(b)}$ to $\emptyset^{(b')}$ (note that if either $b$ or $b'$ are not in $\ko$, then $d(b,b')$ is an index for an everywhere divergent function). If $n$ witnesses that $\chi(a)$ holds, then, for any given $b,b',m, s$, the computation $\varphi^{f^{[b']}_n}_{d(b,b')}(m)[s]$ is $\varphi^{\emptyset^{[b']}}_{d(b,b')}(m)[s]$, i.e.\ it is a computation with hyperarithmetical oracle, which converges on all inputs. The oracles $\seq{\emptyset^{(b)}}{b <_{\ko} a}$ are uniformly computable in $\emptyset^{(a)}$, so the use functions of these computations are majorized by $g_a \leq^*g$. Using the same trick as in the proof of Theorem \ref{thm:list}, we obtain that $a \in \ko$ is equivalent to the following $\lSigma{0}{2}(x)$ formula:
		\[
		\exists n, k \, \forall b, b', m \,\, ((b<_{\ko} b'<_{\ko}a) \rightarrow \varphi^{f^{[b']}_n}_{d(b,b')}(m)[k+g(m)] \downarrow =f^{[b]}_n(m)).
		\]
		
		For the converse direction, let $f \leq_\T x$ be a function which dominates $\hyp$ and let $h \colon \omega \times \omega \rightarrow \omega$ be an $x$-computable $\lSigma{0}{2}$ approximation of $\ko$, i.e.\ $a \in \ko$ if and only if there is some $m \in \omega$ such that $h(a,n)=1$ for every $n \geq m$. We use $h$ to emulate the construction of the proof of Theorem \ref{thm:list}, building in stages a sequence $\seq{y_{n,m}}{n,m \in \omega}$, as follows.
		
		For any $n$, at step $s+1$, we compute $h(n,s+1)$. If $h(n,s+1)=0$ (i.e.\ if we currently believe that $n \notin \ko$), then we extend all the sequences of the form $y_{n,m}$ with $m < s+1$ by one bit, setting that bit equal to $0$. If $h(n,s+1)=1$, then we modify the sequences $y_{n,m}$ with $m <s+1$ as in the proof of Theorem \ref{thm:list}.
		
		If $n \notin \ko$, then there are infinitely many stages $s$ such that $h(n,s)=0$. It is not hard to see that, in the other stages, nothing is done. This is because the function $c$ of Proposition \ref{prop:complist} has the property that, if $n \notin \ko$, then $c(n)$ is the index of an everywhere diverging function. Therefore, we have that $\seq{y_{n,m}}{m \in \omega}$ is a list of finite sets.
		
		If $n \in \ko$, then there is some $m$ such that $h(n,k)=1$ for every $k \geq m$. The elements of the sequence $\seq{y_{n,k}}{k \geq m}$ coincide with the elements of the corresponding sequence defined in the proof of Proposition \ref{prop:higherjumps}, so that finitely many of these are eventually constant with value $0$, and the other ones are $\emptyset^{(n)}$.
		
		This shows that $\seq{y_{n,m}}{n,m \in \omega}$ consists of some finite sets as well as all the sets of the sequence $\seq{\emptyset^{(a)}}{a \in \ko}$. As in the proof of Proposition \ref{prop:higherjumps}, it follows that $x$ can list $\hyp$.
	\end{proof}
	
	We point out that the right to left direction Theorem \ref{thm:hyplist} is not specific to $\hyp$. Indeed, extending the construction of Proposition \ref{prop:higherjumps} as in the above proof only requires that our function $f$ dominates the uniform moduli $g_i$ of a countable family of functions $x_i$ generating the ideal, and that we have a sufficiently definable way to guess the indices witnessing that $g_i$ is a uniform modulus for $x_i$.
	
	Theorem \ref{thm:hyplist} and a simple application of Gandy's basis theorem gives the separation $\li{\hyp} \subsetneq \wl{\hyp}$.
	
	\begin{theorem}
		$\li{\hyp} \subsetneq \wl{\hyp}$.
	\end{theorem}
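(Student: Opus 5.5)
The inclusion $\li{\hyp} \subseteq \wl{\hyp}$ holds by definition, so the task is to exhibit some $y \in \wl{\hyp}$ with $y \notin \li{\hyp}$. The plan is to take $y$ itself to be a weak list of $\hyp$ that is hyperarithmetically low, obtained from Gandy's basis theorem, and then rule out $y \in \li{\hyp}$ using the characterization in Theorem \ref{thm:hyplist}.

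First, the set $W$ of weak lists of $\hyp$ is $\lSigma{1}{1}$. Indeed, $x \in W$ if and only if
\[
\forall a \in \ko \,\, \exists n \,\, (x^{[n]} = \emptyset^{(a)}).
\]
Here $a \in \ko$ is $\lPi{1}{1}$, so it appears negatively inside the implication. The predicate $X = \emptyset^{(a)}$ for $a \in \ko$ is arithmetical by \cite[Theorem II.4.2]{sacks}, and it is also expressible as a $\lSigma{1}{1}$ condition uniformly in $a$. Since every hyperarithmetical function is computable from some $\emptyset^{(a)}$, membership in $W$ can equivalently be expressed by asking that the columns of $x$ contain every $\emptyset^{(a)}$; closing under the relativization of Proposition \ref{prop:complist} then turns $x$ into a list containing all of $\hyp$. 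The set $W$ is nonempty, since $\bigoplus_{n} z_n$ for any enumeration $\seq{z_n}{n \in \omega}$ of $\hyp$ belongs to it. Gandy's basis theorem therefore yields $y \in W$ with $\ko \not\leq_{\hyp} y$, and even $\omega_1^y = \ock$.

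Next, suppose toward a contradiction that $y \in \li{\hyp}$. By Theorem \ref{thm:hyplist} there would be $z \in \hyp$ with $\ko \in \lSigma{0}{2}(y \oplus z)$. Then $\ko$ would be computable from $(y \oplus z)''$, so $\ko \leq_{\hyp} y \oplus z \equiv_{\hyp} y$, contradicting the choice of $y$. Hence $y \in \wl{\hyp} \setminus \li{\hyp}$.

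The main obstacle is the first step: writing "$x$ is a weak list of $\hyp$" as a genuinely $\lSigma{1}{1}$ formula, because the universal quantifier ranges over the $\lPi{1}{1}$ set $\ko$. The cleanest way through is to use the $\lPi{0}{2}$ characterization of "$X$ is a jump hierarchy along $a$". Alternatively, one can use the standard fact that $\{x : \forall h \in \hyp \, \exists n \, x^{[n]} = h\}$ is $\lSigma{1}{1}$, since quantification over $\hyp$ is $\lPi{1}{1}$-bounded via Spector–Gandy. If this is awkward, a fallback is to apply the Kleene/Gandy basis argument to the $\lSigma{1}{1}$ set of $x$ such that some column of $x$ is a jump hierarchy along every notation of a fixed $\lSigma{1}{1}$ pseudo-well-ordering containing $\ko$. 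Such an $x$ computes all of $\hyp$ uniformly in its columns.
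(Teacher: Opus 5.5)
Your skeleton matches the paper's proof. Gandy's basis theorem applied to the $\lSigma{1}{1}$ class $W$ of weak lists of $\hyp$ gives $y \in W$ with $\ko \not\leq_{\hyp} y$, and Theorem \ref{thm:hyplist} gives $\ko \leq_{\hyp} x$ for every $x \in \li{\hyp}$. Your handling of the second step (absorbing the $\hyp$ oracle $z$ and using $\ko \leq_\T (y \oplus z)''$) is more explicit than the paper's.

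The flaw is in your main justification that $W$ is $\lSigma{1}{1}$, because the displayed equivalence is false. The condition $\forall a \in \ko\, \exists n\, (x^{[n]} = \emptyset^{(a)})$ defines a strictly larger class $W'$. A real whose columns are exactly the sets $\emptyset^{(a)}$, $a \in \ko$, lies in $W'$ but is not a weak list of $\hyp$: for instance, the constant function $1$ is not among its columns. Your remark about ``closing under'' the relativization of Proposition \ref{prop:complist} describes a transformation that produces a \emph{different} real. It is not an equivalent definition of $W$, so it does not justify applying Gandy's theorem to $W$ itself.

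Either of the following repairs works:
\begin{itemize}
\item Apply Gandy to $W'$, which your argument does show is $\lSigma{1}{1}$. Then replace $y$ by the weak list $\tilde y \leq_\T y''$ obtained by letting $y''$ decide which $\Phi_e(y^{[n]})$ are total. You still have $\ko \not\leq_{\hyp} \tilde y$.
\item Use the alternative you already mention. The matrix of $\forall h \in \hyp\, \exists n\, (x^{[n]} = h)$ is arithmetical, so the class is $\lSigma{1}{1}$ by the restricted-quantifier (Spector--Gandy/Kleene) theorem. This is exactly the paper's argument.
\end{itemize}
Your formula can also be corrected directly to $\forall a\, \forall e\, [a \notin \ko \lor \exists X\, (X = \emptyset^{(a)} \land (\Phi_e(X) \text{ total} \rightarrow \exists n\, x^{[n]} = \Phi_e(X)))]$, reading $X = \emptyset^{(a)}$ as the arithmetical jump-hierarchy predicate. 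The pseudo-well-ordering fallback is not needed.
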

	\begin{proof}
		The class of weak lists of $\hyp$ is $\{y \in \baire : \forall x \in \hyp \, \exists n \,\, (y^{[n]}=x)\}$ so, by the Spector--Gandy Theorem, \cite[Theorem III.3.5]{sacks} it is a $\lSigma{1}{1}$ set . By Gandy's basis theorem (\cite[Theorem III.1.4]{sacks}), there is some weak list $y$ of $\hyp$ with $y <_{\hyp} \ko$. On the other hand Theorem \ref{thm:hyplist} implies that any $x \in \li{\hyp}$ is such that $\ko \lhyp x$.
	\end{proof}
	
	We conclude the section showing that $\wl{\hyp} \subsetneq \domm{\hyp}$. This fact was first proved by Noam Greenberg and Dan Turetsky\footnote{Personal communication.} using $\omega$-models of (a sufficiently large fragment of) set theory with nonstandard ordinals. It also follows from results of \cite{GO}. Here we give a direct forcing proof (of the generalization of this fact to ideals closed under $\lhyp$) using some tools from \cite{GO} and diagonalizing against weak listings.  We will see these techniques again in the proof of Theorem \ref{thm:sep}.
	
	\begin{theorem}
		Let $\ideal{I}$ be a countable Turing ideal which is $\lhyp$-downward closed. We have $\domm{\ideal{I}} \nsubseteq \wl{\ideal{I}}$.
	\end{theorem}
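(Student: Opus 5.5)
We will build, by a forcing argument, a function $f$ that dominates every function in $\ideal{I}$ but such that no $z \in \ideal{I}$ makes $f \oplus z$ compute a weak list of $\ideal{I}$. The natural forcing is Hechler-style domination forcing relativized to $\ideal{I}$. Conditions are pairs $(\sigma, g)$ with $\sigma \in \omega^{<\omega}$ and $g \in \ideal{I}$. A condition $(\tau, h)$ extends $(\sigma, g)$ if $\tau \succeq \sigma$, $h \geq g$ pointwise, and $\tau(i) \geq g(i)$ for $i \in [|\sigma|, |\tau|)$. Any sufficiently generic $f$ for this forcing meets the dense sets $D_g = \{(\sigma,h) : h \geq g\}$ for $g \in \ideal{I}$, so $f \geq^* g$ for all $g \in \ideal{I}$ and $f \in \domm{\ideal{I}}$.

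For the listing side, fix $z \in \ideal{I}$ and an index $e$. We must show that densely often we force that $\Phi_e(f \oplus z)$ is not a weak list of $\ideal{I}$. This is a diagonalization against all columns simultaneously, so we would produce a single $y \in \ideal{I}$ differing from every column $\Phi_e(f\oplus z)^{[n]}$.

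The key tool (as in \cite{GO}) is to analyze the forcing relation over a condition $(\sigma, g)$. Call a condition $(\sigma,g)$ with $g \in \ideal{I}$ fixed. The set of values that $\Phi_e(f\oplus z)^{[n]}(k)$ can take on $f$ extending $\sigma$ and eventually dominating $g$ is governed by a well-founded rank analysis, namely the standard "Hechler forcing is $\Pi^1_1$-decidable" argument. Whether some extension $(\tau,h)$ forces $\Phi_e(f\oplus z)^{[n]}(k)=v$ is a $\lSigma{1}{1}(g\oplus z)$ question. Equivalently, by the usual rank argument for Hechler forcing, it is decided by a rank function on $\omega^{<\omega}$ that is hyperarithmetic in $g \oplus z$. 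Since $\ideal{I}$ is $\lhyp$-downward closed, these decisions are available inside $\ideal{I}$.

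We then define $y \in \ideal{I}$, $y \lhyp g\oplus z$, by diagonalizing along columns. At column $n$ we choose an input $k_n$ and a value $y(k_n)$ such that no extension of the current condition (with the dominating part above $g$) can force $\Phi_e(f\oplus z)^{[n]}(k_n) = y(k_n)$ while also meeting the requirements already handled. Handling all columns at once from a single condition is the main obstacle. A generic $f$ below $(\sigma,g)$ can realize arbitrarily many values, since different extensions make the column different things.

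The fix I would try first is a "pure decision" property of Hechler-type forcing, also used in \cite{GO}. For each $n,k$ there are, via the rank analysis, only countably many candidate values. Moreover, the set of pairs $(n,k)$ whose value is forced outright, as opposed to only after strengthening the stem, is hyperarithmetic in $g\oplus z$. I would then let $y(\langle n,j\rangle)$ differ from all values forced by stems of length at most $j$. Because the number of stems is infinite, this requires a bound. Such a bound comes from the fact that a condition can only force the $n$-th column to equal a fixed real $w$ if $w \lhyp g\oplus z \oplus \sigma$ via a specific procedure, namely the rank-based decision.

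So the cleaner route is this. Every column that $(\sigma,g)$ forces to be total and equal to a ground-model real $w$ must be uniformly $\lhyp$ in $g\oplus z$ with index computable from $(n,\sigma)$. We then take $y \lhyp g \oplus z$ diagonalizing against this uniformly hyperarithmetic family $\{w_{n,\tau}\}$, for instance $y = $ a real not in the family, obtained by one more hyperjump's worth of diagonalization. Then no extension forces any column to equal $y$, and genericity ensures that $\Phi_e(f\oplus z)$ misses $y$.
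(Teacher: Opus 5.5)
\textbf{There is a genuine gap.} Your high-level plan matches the paper's: force a dominating $f$, then find one $y\in\ideal{I}$ that no condition can force to be a column of $\Phi(f\oplus z)$, using $\lhyp$-closure. But the step that produces $y$ is only asserted, and it is the entire difficulty.

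\textbf{The missing uniform bound.} In your forcing, a column forced by $(\tau,h)$ is a priori bounded only in terms of $h\oplus z$, and $h$ ranges over all of $\ideal{I}$. Nothing you write yields a single $\alpha<\omega_1^{g\oplus z}$ with every forced column $\leq_\T (g\oplus z)^{(\alpha)}$. Even if each $w_{n,\tau}$ were $\lDelta{1}{1}(g\oplus z)$, the relation ``$\tau$ forces value $v$ at $(n,k)$'' is at best $\lPi{1}{1}$. A $\lPi{1}{1}$-indexed family of hyperarithmetic reals (think of $\{\emptyset^{(a)} : a\in\ko\}$) need not lie below any fixed level. Going ``one more hyperjump'' gives $y\notin\ideal{I}$ when, e.g., $\ideal{I}=\hyp$, so it diagonalizes against nothing.

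The paper obtains the uniform bound from a dichotomy. In Case 1, some extension $(S,C)$ and some input $n$ make the set $D$ of stems on which $\varphi(n)$ converges $\omega$-small above $\stem{S}$; then $(S,C\cup D)$ is a condition forcing divergence. In Case 2, the Case 2 hypothesis plus the fact that $\clo$ commutes with binary unions make the $\lSigma{1}{1}(X)$ sets $A_i=\{(\rho,n) : \rho \text{ consistent},\ \rho\notin\clo(\{\nu\succeq\rho : \varphi^\nu(n)=1-i\})\}$ disjoint. A $\lDelta{1}{1}(X)$ separator $Y\leq_\T X^{(\alpha)}$ then computes every forced column from its stem, so $X^{(\alpha+1)}\in\ideal{I}$ is never forced to be a column. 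Your proposal has no counterpart to this dichotomy (which is where totality of a purported weak list is used), nor to the separation step.

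\textbf{The definability claims fail for plain Hechler forcing.} This is why the paper does not use plain Hechler forcing over $\ideal{I}$. ``Some extension of $(\sigma,g)$ forces \ldots'' quantifies over $h\in\ideal{I}$. That is not $\lSigma{1}{1}(g\oplus z)$, and it cannot be replaced by a quantifier over $h\in\baire$.

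To see this, let $T_0$ be a computable tree with a path but no hyperarithmetic path. The functions majorizing some path of $T_0$ form a nonempty, upward closed $\lPi{0}{1}$ class with no hyperarithmetic member. So there is a computable, extension-closed set $E$ of strings that is avoided by the cone $\{\rho : \rho(i)\geq h(i) \text{ for all } i<|\rho|\}$ for some $h\in\baire$, but for no $h\in\hyp$. Let $\varphi^\rho(n)\downarrow=0$ exactly when $\rho\in E$. Then no $\hyp$-condition forces divergence, yet the $\baire$-quantified analogues of $A_0$ and $A_1$ intersect.

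Nor is there a hyperarithmetic rank function deciding the forcing relation. Uniformly in a computable tree $T_0$, $\emptyset\in\clo(E)$ iff $T_0$ has no path, so $\clo$ of a computable set can be $\lPi{1}{1}$-complete.

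The paper avoids these problems by using Hechler tree forcing with $\lPi{1}{1}(X)$ trees and upward closed bad sets $B$ subject to $\stem{T}\notin\clo(B)$. There, whenever a definable set of stems is $\omega$-small above a consistent stem, avoiding it is realized by an actual condition, and the forcing relation is expressible through $\clo$.
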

	
	\begin{proof}
		The proof uses Hechler tree forcing, with ``small'' bad sets.
		
		We let $\mathbb{H}$ be the set Hechler trees, i.e.\ trees $T \subseteq \omega^{<\omega}$ such that, for every $\rho \succeq \stem{T}$, $\finf n \,\, \concat{\rho}{n} \in T$.
		
		Given $A \subseteq \omega^{<\omega}$ and $\tau \in \omega^{<\omega}$, we say that $A$ is \emph{$\omega$-big above $\tau$} if there is a tree $T$ such that $\stem{T}=\tau$, $[T]= \emptyset$, every non-leaf node on $T$ has infinitely many children, and all leaves of $T$ lie in $A$. If $A$ is not $\omega$-big we say that $A$ is \emph{$\omega$-small}.
		We define the function of $\omega$-closure $\clo$ as $\clo(A)=\{\tau : A \text{ is } \omega\text{-big above } \tau\}$.
		
		In \cite{GO}, it is shown that $\clo(\emptyset)=\emptyset$, and for every $A$ and $B$ (i) $A \subseteq \clo(A)$, (ii) $A \subseteq B \implies \clo(A) \subseteq \clo(B)$, (iii) $\clo(A \cup B)= \clo(A) \cup \clo(B)$ and lastly (iv) $\clo(\clo(A))=\clo(A)$. Moreover, if $A$ is $\lPi{1}{1}(X)$ for some $X$, then $\clo(A)$ is also $\lPi{1}{1}(X)$.
		
		Consider the forcing notion $(\mathbb{P}, \leq)$ given by 
		\begin{align*} 
			\mathbb{P} = \{ (T,B) \in \mathbb{H} \times \mathcal{P}(\omega^{<\omega}) : & \, B \text{ is upwards closed } \land \\ &\exists X \in \ideal{I} \,\, T \text{ is }\lPi{1}{1}(X) \, \land \stem{T} \notin \clo(B)\}
		\end{align*}
		with ordering 
		\[ (T, B) \leq (S, C) \iff T \subseteq S \land B \supseteq C. \]
		
		For a condition $(T,B)$ and a sequence $\rho \in \omega^{<\omega}$, say that $\rho$ is \emph{consistent with $(T, B)$} if and only if, letting $T_{\rho}$ be the maximal subtree of $T$ with stem $\rho$, $(T_\rho, B)$ is a condition. Note that $\rho$ is consistent with $(T,B)$ iff $\rho \in T \setminus \clo(B)$.
		Any sufficiently generic filter on $(\mathbb{P}, \leq)$ defines a real $f_G=\bigcup_{T \in G} \stem{G}$. It is not hard to see that $(T,B) \forces f_G \in [T] \setminus \open{B}$.
		
		Forcing with $(\mathbb{P}, \leq)$ yields a function $f_G$ dominating the functions of $\ideal{I}$: this is because given any $(T, B) \in \mathbb{P}$ and any $h \in \ideal{I}$, there is a condition $(T_h, B) \leq (T, B)$ such that the leftmost path of $T_h$ majorizes $h$.
		We now show that, if $G$ is a sufficiently generic filter on $(\mathbb{P}, \leq)$, then $f_G \notin \wl{\ideal{I}}$.
		
		To that end, fix a condition $(T,B)$, a set $X \in \ideal{I}$ such that $T \leq_{\T} X$ and $B$ is $\lPi{1}{1}(X)$ and a $\{0,1\}$-valued Turing functional $\Phi \in \ideal{I}$. Again, we use $\varphi$ to denote some $\ideal{I}$ computable function on finite strings which gives rise to $\Phi$. We show that, densely below $(T,B)$, we can force that $\Phi(f_G)$ either diverges, or converges to some real which is not a $\{0,1\}$-valued weak list of $\ideal{I}$.\footnote{Here, and in the proof of Theorem \ref{thm:sep}, we represent weak lists of $\ideal{I} \subseteq \baire$ with weak lists of the graphs (in $\cantor$) of elements in $\ideal{I}$, and diagonalize against those.} There are two cases:
		
		\textbf{Case 1:} there is $n < \omega$ and $(S, C) \leq (T, B)$ such that $\stem{S} \notin \clo(D)$, where $D=\{ \rho \in S : \varphi^{\rho}_e(n) \downarrow\}$. In this case $(S, C \cup D)$, is a condition, $(S, C \cup D) \leq (T,B)$ and $(S, C \cup D)\forces \Phi(f_G) \uparrow$, so again, we are done.
		
		\textbf{Case 2:} if we are not in Case 1, then for every $n$ and every $(S,C) \leq (T, B)$ there is $\rho$ consistent with $(S, C)$ such that $\varphi^\rho(n)\downarrow \in \{0,1\}$. In other words, for every $n$ and every $\tau \in T \setminus \clo(B)$, we have $\tau \in \clo(\{\rho \succeq \tau : \varphi^\rho(n) \downarrow \in \{0,1\}\})$. We define, for $i \in \{0,1\}$:
		\[ A_i = \{ (\rho, n) : \rho \text{ consistent with }(T,B) \land \rho \notin \clo (\{\nu \succeq \rho : \varphi^\nu(n) = 1- i\}) \}\]
		Note that the definability of $\clo$ implies that each $A_i$ is $\lSigma{1}{1}(X)$.
		
		We claim that $A_0 \cap A_1 = \emptyset$. Indeed, if $(\rho, n) \in A_0 \cap A_1$, then $\rho$ is consistent with $(T,B)$ and $\rho \notin \clo (\{\nu \succeq \rho : \varphi^\nu(n) = 0\}) \cup
		\clo (\{\nu \succeq \rho : \varphi^\nu(n) = 1\})$. Since $\clo$ commutes with binary union, $\rho \notin \clo(\{\nu \succeq \rho : \varphi^\nu(n) \downarrow \in \{0,1\}\})$ which directly contradicts our hypothesis.
		
		Now since $A_0$ and $A_1$ are $\lSigma{1}{1}(X)$ and disjoint, they have a $\lDelta{1}{1}(X)$ separator $Y$ (\cite[II.3.7]{sacks}). Let $\alpha < \omega^X_1$ be such that $Y \leq_\T X^{(\alpha)}$. We will show that the set of conditions which force that $\Phi(f_G)$ does not list (the characteristic function of) $X^{(\alpha + 1)}$ is dense below $(T, B)$.
		
		So, fix $m$ and $(S, C) \leq (T, B)$ and suppose by contradiction that $(S,C) \forces \Phi(f_G)^{[m]}=X^{(\alpha+1)}$. This means that for every $\rho$ consistent with $(S, C)$ and every $n$, $\varphi^\rho(\langle m,n \rangle) \downarrow \implies \varphi^\rho(\langle m,n \rangle) = X^{(\alpha+1)}(n)$.  We claim that this implies $X^{(\alpha + 1)} \leq_\T Y$, indeed we would have:
		\[n \in X^{(\alpha + 1)} \iff (\stem{S}, \langle m,n \rangle) \in Y.\]
		
		To see this, assume first that $n \in X^{(\alpha+1)}$. Then every string $\rho \succeq \stem{S}$ such that $\varphi^{\rho}(\langle m, n \rangle) \downarrow = 0$ must be inconsistent with $(S,C)$, i.e.\ $\{\rho \succeq \stem{S} : \varphi^{\rho}(\langle m, n \rangle) \downarrow = 0\} \subseteq \clo(C)$. This implies, by definition, that $(\stem{S},\langle m, n \rangle) \in A_1$, so that $(\stem{S},\langle m, n \rangle) \in Y$.
		
		Conversely, if $(\stem{S}, \langle m,n \rangle) \in Y$, then $(\stem{S}, \langle m, n \rangle) \notin A_0$, meaning that $\stem{S}\in \clo(D)$, where $D=\{\rho \succeq \stem{S} : \varphi^{\rho}(\langle m, n \rangle)=0\}$. This means that there must be some $\rho \in D$ which is consistent with $(S,C)$. By assumption $\varphi^{\rho}(\langle m, n \rangle)=X^{(\alpha+1)}(n)$, so $n \notin X^{(\alpha+1)}$.
		
		Putting everything together, if $G$ is a sufficiently generic filter on $(\mathbb{P}, \leq)$, then $f_G \in \domm{\ideal{I}} \setminus \wl{\ideal{I}}$.
	\end{proof}
	
	\section{Separating $\wl{\hyp}$ from $\SNE{\hyp}$}
	
	So far we have seen that $\li{\hyp} \subsetneq \wl{\hyp} \subsetneq \domm{\hyp}$. Results of \cite{GO} mentioned in the introduction give $\SNE{\hyp} \subsetneq \domm{\hyp}$. In this section we show that $\wl{\hyp} \subsetneq \SNE{\hyp}$, concluding the analysis.
	
	We actually prove a more general separation, showing that $\wl{\ideal{I}} \subsetneq \SNE{\ideal{I}}$ holds for every ideal $\ideal{I}$ which is closed under $\lhyp$.
	
	\begin{theorem}\label{thm:sep}
		Let $\ideal{I}$ be a countable Turing ideal which is $\lhyp$-downward closed. We have $\wl{\ideal{I}} \subsetneq \SNE{\ideal{I}}$.
	\end{theorem}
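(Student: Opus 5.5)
The inclusion $\wl{\ideal{I}} \subseteq \SNE{\ideal{I}}$ is already known (the measure-theoretic translation of \cite[Theorem 2]{nies}). It remains to produce a real in $\SNE{\ideal{I}} \setminus \wl{\ideal{I}}$. I will mimic the proof that $\domm{\ideal{I}} \nsubseteq \wl{\ideal{I}}$, replacing Hechler tree forcing with a forcing whose generic object is itself a name for a Schnorr null set engulfing all $\ideal{I}$-coded Schnorr null sets.

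\textbf{The forcing.} Conditions are pairs $(\sigma, \mathcal{N})$. Here $\sigma$ is a finite sequence of clopen sets (finite unions of basic sets) $\sigma_k$ with $\lambda(\sigma_k) \leq 2^{-k}$; this is a finite approximation to a name $\seq{U_k}{k\in\omega}$ with $U_k=\bigcup_j \sigma_{\langle k,j\rangle}$, and the measure bounds are arranged so that $\lambda(U_k)$ is computable from the generic. The second component $\mathcal{N}$ is a finite set of $\ideal{I}$-coded Schnorr null sets, normalized as in Remark \ref{rem:names}, which we promise to cover. Extension adds finitely many clopen pieces that cover the promised sets beyond the current level, with the usual measure bookkeeping. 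Genericity ensures that every $\ideal{I}$-coded null set is eventually promised, so the generic $G$ computes a name for a Schnorr null set containing all of them, and $G \in \SNE{\ideal{I}}$. To make the fusion argument work in a $\lPi{1}{1}$/$\lhyp$-definable way, I will instead set things up, as in \cite{GO}, as a tree forcing. The tree of possible finite names admits, above each node, a ``large'' set of immediate successors; the bad sets are measured by a closure operator analogous to $\clo$, satisfying monotonicity, idempotence, $\clo(A\cup B)=\clo(A)\cup\clo(B)$, and preservation of $\lPi{1}{1}(X)$-definability. The natural choice is: $\tau$ is in the closure of $A$ if there is a well-founded tree above $\tau$ each of whose non-leaf nodes has a successor set that is ``big'', with leaves in $A$. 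Here ``big'' should be a filter-like notion, e.g.\ the successors include all clopen extensions covering some finite union of null-set stages. This notion must be closed under finite intersections, which is what gives the binary-union property.

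\textbf{Diagonalization.} Fix a condition, an $X \in \ideal{I}$ computing its data, and a functional $\Phi$ that is $\ideal{I}$-computable. I run exactly the two cases of the previous proof. In Case 1, some $n$ has the property that the set $D$ of convergence strings is small above the stem, so adding $D$ to the bad set forces divergence. Otherwise, I define the disjoint $\lSigma{1}{1}(X)$ sets $A_0, A_1$ and take a $\lDelta{1}{1}(X)$ separator $Y \leq_\T X^{(\alpha)}$. The same argument then shows densely that $\Phi(G)$ has no column equal to $X^{(\alpha+1)}$. This set lies in $\ideal{I}$ by $\lhyp$-downward closure, so $G \notin \wl{\ideal{I}}$.

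\textbf{Main obstacle.} The hard step is designing the largeness notion and closure so that two demands hold at once. First, promises to cover null sets can always be honored while staying outside the closure of the bad set. Second, the closure still commutes with binary unions and stays $\lPi{1}{1}$. Measure constraints are not naturally ``cofinite'' the way Hechler successors are, so I would first try defining a successor set to be big if it contains all clopen extensions covering a given finite stage of some $\ideal{I}$-coded null set of small enough measure. I would then verify closure under finite intersection by taking unions of the covering requirements, which doubles the measure budget and is absorbed by the geometric bounds.
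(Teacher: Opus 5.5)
Your outer skeleton matches the paper's: a divergence case, then a case where two disjoint $\lSigma{1}{1}(X)$ sets get a $\lDelta{1}{1}(X)$ separator $Y\leq_\T X^{(\alpha)}$, then diagonalization against $X^{(\alpha+1)}\in\ideal{I}$. But there is a genuine gap: you never fix a forcing for which that skeleton can be verified. You start with promise conditions $(\sigma,\mathcal{N})$ and then switch to an unspecified tree forcing with a closure operator. The step you call the main obstacle is exactly where the content of the theorem lies, and the choice you sketch does not work as stated.

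\begin{itemize}
\item Your largeness notion refers to ``some $\ideal{I}$-coded null set''. For a general countable $\lhyp$-closed $\ideal{I}$, a quantifier over $\ideal{I}$ is not $\lPi{1}{1}(X)$ for any fixed $X\in\ideal{I}$, so $A_0,A_1$ would no longer be $\lSigma{1}{1}(X)$.
\item The binary-union property comes from partition regularity of the successor sets allowed in the witnessing well-founded trees. In the Hechler case these are the infinite sets, i.e.\ the positive sets of the cofinite filter. It does not come from closure of a filter under intersection. If witnessing trees must have filter-type successor sets, as you propose, the property already fails for cofinite branching: let $A$, $B$ be the nonempty strings ending in an even, resp.\ odd, number. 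The empty string is in the closure of $A\cup B$ but in neither closure.
\item Doubling the measure budget at every node of a well-founded tree of arbitrary rank is not controlled by any geometric bound.
\item You do not say how promises and bad sets coexist so that forcing divergence in Case 1 leaves all promises satisfiable.
\end{itemize}

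The missing idea is that no closure operator is needed: the measure budget itself is the smallness notion. The paper's conditions are $(\sigma,R,q)$ with $q<1/2$ and $R$ a nonempty $\lSigma{1}{1}(X)$ class of names of open sets of measure exactly $q$ containing $\hat\sigma$. The generic $x_G$ names a single open set of measure $1/2$ covering every $\ideal{I}$-coded null set. Since extension only enlarges the committed open set, any $\rho$ with $\lambda(\hat\rho\cup\open{f})\geq 1/2$ for all $f$ in the side class can never become a stem. That is Case 1, and it forces divergence with no explicit bad set. In Case 2, $A_i$ consists of the triples $(\tau,\varepsilon,n)$ for which one extra commitment of slack $\varepsilon$ blocks every $\rho\succeq\tau$ with output $1-i$. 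Disjointness of $A_0$ and $A_1$, your union property, comes from merging two such commitments into one of slack $2\varepsilon$. This merging happens once, not along a tree, and it is harmless because any condition can be extended so that $r-\lambda(\hat\tau)$ is arbitrarily small. Finally, the Schnorr null set is not built level by level. It is $\bigcap_{\sigma\in 2^{<\omega}}(\sigma+\open{x_G})$, which is null by Kolmogorov's 0-1 law. Its Schnorr name is computable from $x_G$ because every translate has measure exactly $1/2$.
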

	
	\begin{proof}
		We force over $\ideal{I}$ to obtain a real $x_G$ and a Schnorr name $x \leq_\T x_G$ for a null $\bPi{0}{2}$ set $S \subseteq \cantor$ which covers all the $\ideal{I}$-coded Schnorr null sets, but such that $\ideal{I}(x_G)=\{y \in \baire : y \leq^{\ideal{I}} x_G\}$ does not contain a weak list of $\ideal{I}$, demonstrating $\SNE{\ideal{I}} \setminus \wl{\ideal{I}}$.
		
		In the rest of this proof we adopt the following notational conventions.
		\begin{notation}
			If $\sigma \in \omega^{<\omega}$ is a finite sequence of codes for basic clopens $\seq{C_i}{i < |\sigma|}$ of Cantor space, we abbreviate $\bigcup_{i<|\sigma|}C_i$ as $\hat{\sigma}$. Moreover, for a rational $q \in [0,1]$, we write $R_q$ for the set of names for open subsets of $\cantor$ of measure $q$, i.e.\ $R_q= \{ f \in \baire : \lambda(\open{f}) = q\}$. Note that the sets $R_q$ are uniformly $\lPi{0}{2}$.
		\end{notation}
		
		Let $(\mathbb{P}, \leq)$ be the forcing given by
		\begin{align*}
			\mathbb{P} = \{ (\sigma, R, q) : \,
			& q \in (0, 1/2), \\
			& R \subseteq R_q \text{ is a nonempty $\lSigma{1}{1}(X)$ class for some $X \in \ideal{I}$},\\
			& \sigma \in \omega^{< \omega}, \text{ and}\\
			& \forall f \in R \, (\hat \sigma \subseteq \open{f}) \}.
		\end{align*}
		and ordering
		\[ (\tau, S, r) \leq (\sigma, R, q) \iff 
		\tau \succeq \sigma \, \land \,
		\forall f \in S \, \exists g \in R \,\, (\open{g} \subseteq \open{f}).\]
		Notice that the second condition in particular implies that $r \geq q$.
		
		The following claim establishes some straightforward properties for generics.
		
		\begin{claimn*}
			Let $G$ be a sufficiently generic filter on $\mathbb{P}$ and let $x_G= \bigcup_{p \in G} \sigma^p$.  Then
			\begin{enumerate}
				\item $x_G \in \baire$ and $x_G$ satisfies $\lambda(\open{x_G}) = \frac12$,
				\item $\open{x_G}$ covers all $\ideal{I}$-coded null sets.
			\end{enumerate}
		\end{claimn*}
		
		\begin{proof}[Proof of Claim]
			\begin{enumerate}
				\item Clear.
				\item Let $\seq{(f_i, \lambda_i)}{i < \omega} \in \ideal{I}$ be a Schnorr name for the null set $N=\bigcap_{i \in \omega} \open{f_i}$, $(\sigma, R, q)$ a condition and $i \in \omega$ sufficiently large so that $2^{-i} + q < \frac12$. Then $(\sigma, S, q + 2^{-i})$, where 
				\[S = \{g \in R_{q + 2^{-i}} : \hat{\sigma} \cup \open{f_i} \subseteq \open g \land \exists h \in R\ (\open{h} \subseteq \open{g})\},\]
				is a condition extending $(\sigma, R, q)$.  Moreover, the sets $D_{\tau} = \{ (\rho, T, p) : \hat{\rho} \supseteq [\tau] \}$ are dense above $(\sigma, S, q + 2^{-i})$ for each basic clopen $[\tau]$ in $\open{f_i}$.
				
				This shows that $(\sigma, S, q + 2^{-i}) \Vdash N \subseteq \open{x_G}$, so $\open{x_G}$ covers all $\ideal{I}$-coded Schnorr null sets. 
			\end{enumerate}
		\end{proof}
		
		Now a standard argument, which we sketch for completeness, shows that $x_G$ computes a $\bPi{0}{2}$-name for a null set $N$ which contains all $\ideal{I}$-coded null sets. For every $\sigma \in 2^{<\omega}$ and every $x \in \cantor$, we write $\sigma +x$ to denote the bitwise XOR sum of $\sigma$ and $x$. For any set $A \subseteq \cantor$, we write $\sigma + A$ to denote the set $\{\sigma + x : x \in A \}$.
		
		\begin{claimn*}
			Let $G$ be a sufficiently generic filter on $\mathbb{P}$ and let $A=\bigcap_{\sigma \in 2^{<\omega}} \sigma +\open{x_G}$. Then $A$ is $\ideal{I}(x_G)$-coded $\bPi{0}{2}$, $\lambda(A)=0$, and $N \subseteq A$ for every $\ideal{I}$-coded null set.
		\end{claimn*}
		\begin{proof}[Proof of claim]
			The fact that $A$ is $\ideal{I}(x_G)$-coded and $\bPi{0}{2}$ is clear by definition. The fact that $A$ covers all $\ideal{I}$-coded null sets follows from the fact that $\open{x_G}$ does, and for every $\sigma \in 2^{<\omega}$ and every (measurable) $X \subseteq \cantor$, $\lambda(\sigma + X)=\lambda(X)$. Now, since $A$ is a \emph{tailset} (i.e.\ for every $\sigma,\tau \in 2^{<\omega}$ and every $x \in \cantor$, if $\concat{\sigma}{x} \in A$ and $|\tau|=|\sigma|$, then $\concat{\tau}{x} \in A$) and $\lambda(A) < 1$, by Kolmogorov's 0-1 Law (see, e.g.\ \cite[Theorem 1.2.4]{downeyhirschfeldt}) it follows that $\lambda(A)=0$.
		\end{proof}
		
		We claim that $x_G$ can compute a Schnorr name for $A$. This is due to the fact that all translates of $A$ have measure $\frac12$.
		
		\begin{claimn*}
			There is a Schnorr name $x \leq_{\T} x_G$ for $A$.
		\end{claimn*}
		\begin{proof}[Proof of claim]
			Given an effective enumeration $i$ of $2^{<\omega}$, for every $n$, let $A_n=i(n) + \open{x_G}$. The open sets in the name for $A$ are simply $\seq{B_n}{n \in \omega}$, where $B_n= \bigcap_{m \leq n}A_m$ for every $n$.
			
			We can compute an approximation of $\lambda(B_n)$ from below, uniformly in $n$, by simply going through the names of $\seq{A_m}{m \leq n}$ and computing increasingly tight lower bounds on the measure for their intersection. We can also get approximations from above (uniformly in $n$ again), as follows: to compute an upper bound $b_{n,i}$ for the measure of $B_n$ with $b_{n,i} \geq \lambda(B_n) \geq b_{n_i} -2^{-i}$, it suffices to go through the names for $\seq{A_m}{m \leq n}$ until we have enumerated subsets of measure $\geq \frac12 - 2^{-i}$ of each of the $A_m$'s. 
		\end{proof}		
		The remainder of this proof is dedicated to showing that for every sufficiently generic $G$, $x_G$ does not $\ideal{I}$-compute a weak listing of $\ideal{I}$.
		To that end, we fix a condition $(\sigma, R, q)$ and an $\ideal{I}$-coded $\{0,1\}$-valued Turing functional $\Phi$. Again, we use $\varphi$ to denote the corresponding function on finite strings. We show that, densely below $(\sigma, R, q)$, we can ensure that $\Phi(x_G)$ is not a $\{0,1\}$-valued weak list of $\ideal{I}$. There are two cases:
		
		\noindent\textbf{Case 1.} There is $(\tau, S, r) \leq (\sigma, R, q)$ and $n < \omega$ such that for every $f \in S$ and every $\rho \succeq \tau$, if $\varphi^{\rho}(n) \downarrow = i \in \{0,1\}$, then $\lambda(\hat{\rho} \cup \open{f}) \geq \frac12$. We claim that in this case, we have $(\tau, S, r) \forces \varphi^{x_G}(n) \uparrow$.
		
		Indeed, suppose for contradiction that $G$ is a sufficiently generic filter on $\mathbb{P}$ such that $\varphi^{x_G}(n) \downarrow$, and let $(\rho, T, s) \in G$ with $\varphi^{\rho}(n) \downarrow$. Now take any $g \in T$ and $f \in S$ with $\open{f} \subseteq \open{g}$. We have $\hat{\rho} \subseteq \open{g}$, so that $\hat{\rho} \cup \open{f} \subseteq \open{g}$, and our hypothesis yields $\lambda(\hat{\rho} \cup \open{f}) \geq \frac 12$, hence $\lambda(\open{g}) \geq \frac 12$. This is a contradiction.
		
		\noindent\textbf{Case 2.} Assume there is no $n \in \omega$ for which there is a condition $(\tau, S, r) \leq (\sigma, R, q)$ as above and let $X \in \ideal{I}$ be such that that $R$ is $\lSigma{1}{1}(X)$ and $\varphi$ is $X$-computable. We define, for $i \in \{0,1\}$, 
		\begin{multline*}
			A_i = \bigg\{ (\tau, \varepsilon, n) : 
			\tau \succeq \sigma \land
			2\varepsilon + \lambda(\hat{\tau}) < \frac12 \,  \land \\
			\exists f \in R \, \exists g \, \in R_{\varepsilon + \lambda(\hat{\tau})} \, \bigg(\hat{\tau} \cup 
			\open{f} \subseteq \open{g} \land \forall \rho \succeq \tau \,\, \bigg(\varphi^{\rho}(n) \downarrow = i-1\\
			\rightarrow \lambda(\hat{\rho} \cup \open{g}) \geq \frac12\bigg) \bigg)\bigg\} 
		\end{multline*}
		
		\begin{claimn*}
			$A_0 \cap A_1 = \emptyset$.
		\end{claimn*}
		\begin{proof}[Proof of claim]
			Suppose the contrary, and take $(\tau, \varepsilon, n) \in A_0 \cap A_1$ and $\seq{f_i}{i \in \{0,1\}}$ witnessing $(\tau, \varepsilon, n) \in A_i$.  Let
			\begin{multline*}
				S = \{f \in R_{2\varepsilon + \lambda(\hat{\tau})} : \hat{\tau} \subseteq \open{f}
				\land \exists g \in R \,\, ( \open{g} \subseteq \open{f}) \land \\
				\forall \rho \succeq \tau \,\, (\varphi^{\rho} \downarrow = j \in \{0,1\} \rightarrow \lambda(\hat{\rho} \cup \open{f}) \geq 1/2) \}.
			\end{multline*}
			Then $S$ is $\Sigma_1^1(X)$, and nonempty, since $f_0 \oplus f_1 \in S$.  Thus, $(\tau, S, 2\varepsilon + \lambda(\hat \tau))$ is a condition extending $(\sigma, R, q)$, and $(\tau, S, 2\varepsilon + \lambda(\hat{\tau})) \forces \varphi^{x_G}(n) \uparrow$, contradicting that we are in Case 2.
		\end{proof}
		Since $A_0$ and $A_1$ are $\lSigma{1}{1}(X)$, there is a $\lDelta{1}{1}(X)$ set $B$ with $A_0 \subseteq B$ and $A_1 \cap B = \emptyset$.  Let $\alpha < \omega^X_1$ be such that $B \leq_\T X^{(\alpha)}$. The following two claims allow us to show that $\Phi(x_G)$ does not list the set $X^{(\alpha+1)} \in \ideal{I}$, concluding the proof.
		
		First we note that, without loss of generality, we can assume that in a condition $(\tau, S, r)$, an arbitrarily large portion of $r$ is taken up by the measure of the (open coded by the) ``stem'' $\tau$.
		
		\begin{claimn*}
			For every condition $(\tau, S, r)$ and every $d > 0$, there is a condition $(\tau_d, S_d, r) \leq (\tau, S, r)$ such that $S_d \subseteq S$ and $r-\lambda(\tau_d) \leq d$.
		\end{claimn*}
		\begin{proof}[Proof of claim]
			Fix $d >0$ and take some $f \in S$. We can extend $\tau$ by concatenating it with an initial segment $\nu$ of $X$ which is chosen so that, if $\rho=\concat{\tau}{\nu}$, then $\lambda(\hat{\rho}) \geq r - d$. This is possible because $\hat{\tau} \subseteq \open{f}$ and $\lambda(\open{f})=r$. Then, we can let $S_d=\{g \in S : \hat{\rho} \subseteq \open{g}\}$ and obtain that $S_d$ is nonempty, because $f \in S_d$, so that $(\rho, S_d, r)$ is a condition as above. 
		\end{proof}
		
		\begin{claimn*}
			For each $m < \omega$, we can ensure that the $m$-th column of $\Phi(x_G)$ is not $X^{(\alpha + 1)}$, densely below $(\sigma, R, q)$.
		\end{claimn*}
		
		\begin{proof}[Proof of claim]
			By contradiction suppose this were not the case, so let $m \in \omega$ and $(\tau, S, r) \leq (\sigma, R, q)$ be such that $(\tau, S, r) \Vdash \Phi(x_G)^{[m]}=X^{(\alpha + 1)}$. By the previous claim, we can assume further that $r -\lambda(\hat{\tau}) < 1/2- r$.\footnote{Note that $r-\lambda(\hat{\tau}) > 0$, $r-\lambda(\hat{\tau})+\lambda(\hat{\tau}) = r$, $2(r-\lambda(\hat{\tau}))+\lambda(\hat{\tau})=2r-\lambda(\hat{\tau}) <1/2-r+r=1/2$. }
			
			We show that $X^{(\alpha+1)} \leq_\T B$ by showing that, for every $n$,
			\[n \in X^{(\alpha+1)} \iff \,\, (\tau, r - \lambda(\hat{\tau}), \langle m,n \rangle) \in B,\]
			and this is a contradiction.
			
			If $n \in X^{(\alpha+1)}$, since $(\tau, S, r) \Vdash \Phi(x_G)^{[m]}=X^{(\alpha + 1)}$, we have that for every $\rho \succeq \tau$ and every $f \in S$, if $\open{g} \supseteq \hat{\rho} \cup \open{f}$ and $\varphi^{\rho}(\langle m , n \rangle) \downarrow =0$, then $\lambda(\open{g}) \geq 1/2$ (denote this as $(\star)$). Now let $f \in S$. By definition of extension in $\mathbb{P}$, there is some $g \in R$ with $\open{g} \subseteq \open{f}$ and, by $(\star)$, for any $\rho \succeq \tau$, if $\varphi^{\rho}(\langle m,n \rangle) \downarrow=0$, then $\lambda(\hat{\rho} \cup \open{f}) \geq 1/2$. Since $g \in R$, $f \in S \subseteq R_r$ and $r=\lambda(\hat{\tau})+ r-\lambda(\hat{\tau})$, $g$ and $f$ are witnesses for the fact that $(\tau, r-\lambda(\hat{\tau}), \langle m,n \rangle) \in A_1 \subseteq B$. 
			
			Conversely, assume $(\tau, r-\lambda(\hat{\tau}), \langle m,n \rangle) \in B$, so that, in particular $(\tau, r-\lambda(\hat{\tau}), \langle m,n \rangle) \notin A_0$. By definition of $A_0$ it must be that, for every $g \in R$ and every $f \in R_{r} \supseteq S$ with $\hat{\tau} \cup \open{g} \subseteq \open{f}$, there is some $\rho \succeq \tau$ with $\varphi^{\rho}(\langle m,n \rangle) \downarrow=1$ and $\lambda(\hat{\rho} \cup \open{f}) < 1/2$. This immediately implies that there is some $(\rho, T, t)$ extending $(\tau, S, r)$ with $\varphi^{\rho}(\langle m,n \rangle)=1$, and, since we assumed that $(\tau, S, r) \forces \Phi(x_G)^{[m]}=X^{\alpha+1}$, we get $n \in X^{\alpha+1}$.
		\end{proof}
		Putting everything together, we get that if $x_G$ is sufficiently generic for $\mathbb{P}$, then $\ideal{I}(x_G)$ does not contain a weak list of $\ideal{I}$, concluding the proof. 
	\end{proof}


\begin{thebibliography}{BBTNN15}
		
		\bibitem[BBTNN15]{nies}
		J{\"o}rg Brendle, Andrew Brooke-Taylor, Keng~Meng Ng, and Andr{\'e} Nies.
		\newblock An analogy between cardinal characteristics and highness properties
		of oracles.
		\newblock In {\em Proceedings of the 13th Asian Logic Conference}, pages 1--28.
		World Scientific, 2015.
		
		\bibitem[DH10]{downeyhirschfeldt}
		Rodney~G. Downey and Denis~R. Hirschfeldt.
		\newblock {\em Algorithmic Randomness and Complexity}.
		\newblock Theory and Applications of Computability. Springer, New York, NY,
		2010.
		
		\bibitem[GKT19]{GKT}
		Noam Greenberg, Rutger Kuyper, and Dan Turetsky.
		\newblock Cardinal invariants, non-lowness classes, and {W}eihrauch
		reducibility.
		\newblock {\em Computability}, 8(3-4):305--346, 2019.
		
		\bibitem[GO26]{GO}
		Noam Greenberg and Gian~Marco Osso.
		\newblock Forcing and classes of {H}{Y}{P}-dominating functions.
		\newblock In preparation, 2026.
		
		\bibitem[JJ72]{jockusch}
		Carl~G. Jockusch~Jr.
		\newblock Degrees in which the recursive sets are uniformly recursive.
		\newblock {\em Canadian Journal of Mathematics}, 24(6):1092--1099, 1972.
		
		\bibitem[RJ87]{rogers}
		Hartley Rogers~Jr.
		\newblock {\em Theory of recursive functions and effective computability}.
		\newblock MIT press, 1987.
		
		\bibitem[Rup10]{rupprecht}
		Nicholas Rupprecht.
		\newblock Relativized {S}chnorr tests with universal behavior.
		\newblock {\em Archive for Mathematical Logic}, 49(5):555--570, 2010.
		
		\bibitem[Sac17]{sacks}
		Gerald~E. Sacks.
		\newblock {\em Higher recursion theory}.
		\newblock Perspectives in Logic. Cambridge University Press, 2017.
		
		\bibitem[Sol78]{hypenc}
		Robert~M. Solovay.
		\newblock Hyperarithmetically encodable sets.
		\newblock {\em Transactions of the American Mathematical Society}, 239:99--122,
		1978.
		
	\end{thebibliography}
\end{document}